\newfont{\msbm}{msbm10 at 11pt}
\newcommand {\R} {\mbox{\msbm R}}
\newcommand {\E} {\mbox{\msbm E}}
\newcommand {\1} {\mathds{1}}
\def\P{\ensuremath{\mathbf{P}}}
\newfont{\msbmsm}{msbm10 at 8pt}
\def\eps{\varepsilon}
\newtheorem{Theo}{Theorem}
\newtheorem{Lemma}[Theo]{Lemma}
\newtheorem{Prop}[Theo]{Proposition}
\newtheorem{Rmk}[Theo]{Remark}
\begin{document}
\title{The all-time maximum for branching Brownian motion with absorption conditioned on long-time survival}

\author{Pascal Maillard\thanks{Universit\'e de Toulouse and Institut Universitaire de France. Supported in part by ANR grant ANR-20-CE92-0010-01.} \, and Jason Schweinsberg\thanks{University of California San Diego.  Supported in part by NSF Grant DMS-1707953}}
\maketitle

\begin{abstract}
We consider branching Brownian motion in which initially there is one particle at $x$, particles produce a random number of offspring with mean $m+1$ at the time of branching events, and each particle branches at rate $\beta = 1/2m$.  Particles independently move according to Brownian motion with drift $-1$ and are killed at the origin.  It is well-known that this process eventually dies out with positive probability.  We condition this process to survive for an unusually large time $t$ and study the behavior of the process at small times $s \ll t$ using a spine decomposition.  We show, in particular, that the time when a particle gets furthest from the origin is of the order $t^{5/6}$.

Keywords: branching Brownian motion, Yaglom limit theorem, Brownian taboo process, excursion theory

MSC 2020 classification: 60J80, 60J70, 60F05 
\end{abstract}

\section{Introduction}

We will consider one-dimensional branching Brownian motion with absorption.  This process was first studied in 1978 by Kesten \cite{kesten}.  In addition to its intrinsic mathematical interest, branching Brownian motion with absorption has been applied in the study of partial differential equations \cite{hhk06} and has been used to model populations undergoing selection in \cite{bbs, bdmm1, bdmm2}.

The process evolves as follows.  At time zero, there is a single particle at $x > 0$.  Each particle moves according to one-dimensional Brownian motion with a drift of $-\mu$.  Each particle independently branches at rate $\beta$, and when a branching event occurs, the particle dies and is replaced by a random number of offspring.  We assume the numbers of offspring produced at different branching events are independent and identically distributed, and we denote by $p_k$ the probability that an individual has $k$ offspring.  We define $m$ so that $m + 1 = \sum_{k=0}^{\infty} k p_k$ is the mean of the offspring distribution, and we assume the offspring distribution has finite variance.  We also assume that $\beta = 1/2m$, which by a scaling argument can be done without loss of generality.

Kesten \cite{kesten} showed that if $\mu \geq 1$, then the process almost surely goes extinct, whereas if $\mu < 1$, then with positive probability, the process survives forever.  We will assume that $\mu = 1$, which is the case of critical drift.  Let $\zeta$ denote the time when the process goes extinct, which is almost surely finite.  It was shown in \cite{ms20}, building on earlier work in \cite{kesten, bbs14}, that there is a positive constant $C$ such that 
\begin{equation}
\label{eq:survival}
\P_x(\zeta > t) \sim C x e^{x - (3 \pi^2 t/2)^{1/3}},
\end{equation}
 where $\sim$ means that the ratio of the two sides tends to one as $t \rightarrow \infty$ while $x$ is fixed.

Yaglom-type limit theorems for the behavior of the process conditioned on the event that it survives for an unusually long time were first proved by Kesten \cite{kesten}.  Kesten obtained estimates on the number of particles at time $t$ and the position of the right-most particle at time $t$ for the process conditioned on survival until time $t$.  More precise results along these lines, as well as other results about the behavior of the process conditioned on survival until time $t$,
were recently established in \cite{ms20}.  The main results in \cite{ms20} focused on the behavior of the process either at times close to $t$, or at times in $[\delta t, (1 - \delta)t]$ for a fixed constant $\delta > 0$.  In the present paper, we will be concerned primarily with the behavior of the branching Brownian motion at times $s \ll t$, when the process is conditioned to survive for an unusually long time $t$.

We now introduce some notation. We use the standard Ulam--Harris labelling, in that particles are labeled by words over the positive integers. Let $N(s)$ denote the set of particles alive at time $s$.  If $u \in N(s)$, then $X_u(s)$ denotes the position of the particle $u$ at time $s$, and for $0 < r < s$, we denote by $X_u(r)$ the position at time $r$ of the ancestor of this particle.  Let $X(s) := \{X_u(s): u \in N(s)\}$ be the set of the locations of all particles at time $s$, and let $$M(s) := \max_{u \in N(s)} X_u(s)$$ be the location of the particle furthest from the origin at time $s$.
For $s \in [0, t]$, let $$L_t(s) := c (t - s)^{1/3}, \qquad c := \bigg( \frac{3 \pi^2}{2} \bigg)^{1/3}.$$  Roughly speaking, the significance of $L_t(s)$ is that if a particle reaches $L_t(s)$ at time $s$, then there is a good chance that a descendant of this particle will still be alive at time $t$.  See \cite{bbs14, ms20} for more precise versions of this statement.  To simplify notation, we also write
$$L_t := L_t(0) = ct^{1/3}.$$  Because, throughout the paper, we will be considering asymptotics as $t \rightarrow \infty$, we will use the notation $f(t) \sim g(t)$ to mean $\lim_{t \rightarrow \infty} f(t)/g(t) = 1$ and $f(t) \ll g(t)$ to mean $\lim_{t \rightarrow \infty} f(t)/g(t) = 0$.  We will write $f(t) \asymp g(t)$ if there exist constants $0 < C_1 < C_2 < \infty$ such that $C_1f(t) < g(t) < C_2f(t)$ for all $t > 0$.
We will also say that a random variable $Y_t$, which depends on $t$, is $\Theta_p(f(t))$ if for all $\eps > 0$, there exist constants $0 < C_1 < C_2 < \infty$ such that $P(C_1 f(t) < Y_t < C_2 f(t)) > 1 - \eps$ for sufficiently large $t$. Convergence in law is denoted by $\Rightarrow$, and convergence in probability is denoted by $\rightarrow_p$.

\subsection{Main Result}

Our main result, Theorem \ref{th:global_max} below, concerns the maximum location that any particle achieves before time $t$, as well as the time when this all-time maximum occurs, when the process is conditioned to survive until time $t$.  Note, in particular, that the time at which the all-time maximum is achieved is $\Theta_p(t^{5/6})$, which is much smaller than $t$.  This occurs because, conditional on survival of the process for a large time $t$, with high probability a particle quickly moves very far away from the origin, getting close to $L_t$.  After this initial burst, the value of $M(s)$ decreases over time as the process heads towards its extinction shortly after time $t$.  See Figure 5 of \cite{ds07} for an illustration of this phenomenon when the drift $\mu$ is slightly larger than one.  See also Theorems~1.5 and 2.9 of \cite{ms20} for results about the asymptotic behavior of $M(s)$ when $s \geq \delta t$, conditional on survival of the process until time $t$.

\begin{Theo}
\label{th:global_max}
Define
\[
\mathfrak M = \max_{s\ge0} M(s),\quad \mathfrak m = \operatorname*{arg\,max}_{s\ge0} M(s).
\]
Suppose the position $x$ of the initial particle may depend on $t$ but satisfies $L_t - x \gg t^{1/6}$.  Then as $t \rightarrow \infty$, conditional on $\zeta > t$ we have the convergence in law
\begin{equation}\label{globmax}
\left(\frac{L_t - \mathfrak M}{t^{1/6}}, \frac{\mathfrak m}{t^{5/6}}\right) \Rightarrow \left(c^{1/2}R, 3c^{-1/2}UR\right),
\end{equation}
where $R$ and $U$ are independent random variables, $R$ is Rayleigh distributed with density $2r e^{-r^2}$ on $\R_+$, and $U$ has a uniform distribution on $[0,1]$.
\end{Theo}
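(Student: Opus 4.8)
The plan is to analyze the conditioned process through a \emph{spine decomposition}. On the $\sigma$-field $\mathcal F_s$ generated by the first $s=o(t)$ units of time, the law $\P_x(\,\cdot\mid\zeta>t)$ is close to the change of measure with Radon--Nikodym density $\propto\sum_{u\in N(s)}q_{t-s}(X_u(s))$, where $q_\tau(y):=\P_y(\zeta>\tau)$; under the size-biased measure this selects a distinguished lineage $(Y_s)_{s\ge0}$, the spine, which moves as the Doob $h$-transform, by the space--time function $q_{t-s}(\cdot)$, of Brownian motion with drift $-1$. Using the asymptotics of $q_\tau$ underlying \eqref{eq:survival} (as developed in \cite{bbs14,ms20}), I would show that for $s\ll t$ the spine behaves, to the precision required here, like the Brownian taboo process on the slowly shrinking interval $[0,L_t(s)]$ --- the diffusion with generator $\frac12\partial_{yy}+\frac{\pi}{L_t(s)}\cot\!\big(\pi y/L_t(s)\big)\partial_y$ --- with independent branching Brownian motions with absorption (the ``bushes'') grafted onto it at the branch points. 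Two reductions then bring the theorem down to a statement about the spine. First, $\mathfrak M$ and $\mathfrak m$ agree with $\max_{s\ge0}Y_s$ and its argmax up to errors $o(t^{1/6})$ and $o(t^{5/6})$: the bush rooted at height $Y_r$ is an ordinary branching Brownian motion with absorption, so the first-moment bound $\E_y[\#\{\text{particles ever reaching }\ell\}]\le e^{y-\ell}$ (valid because $\beta m=\frac12$) and a union bound over the $O(t)$ bushes show that, conditionally on $\zeta>t$, no bush particle ever exceeds $\max_r Y_r+O(\log t)$. Second, the maximum is attained at a time $O(t^{5/6+\eps})$, since for larger $s$ every particle alive at time $s$ already lies well below $L_t-t^{1/6}$ (this uses that, conditionally on $\zeta>t$, the configuration stays below $L_t(\cdot)$ up to lower-order corrections, from \cite{bbs14,ms20}).

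On the range $s=O(t^{5/6+\eps})$ one has $L_t(s)=L_t-\nu s+o(t^{1/6})$ with $\nu:=\frac{c}{3}t^{-2/3}$, and with $D_s:=L_t(s)-Y_s\ge0$ this gives $\max_{s}Y_s=L_t-\min_{s\ge0}\big(\nu s+D_s\big)+o(t^{1/6})$. Near the barrier the taboo drift is $\approx-1/D_s$, so $D_s$ evolves like a Bessel$(3)$ process with an added constant drift $-\nu$; in particular $D_s$ is repelled from $0$, and $\nu s+D_s$ can be small only during a downward excursion of $D$. The heart of the proof is to show that the point process of pairs $(\text{time},\text{minimal value})$ of the downward excursions of $D$ reaching below a small fixed multiple of $L_t$ converges, under the rescaling $(s,m)\mapsto(s/t^{5/6},m/t^{1/6})$, to a Poisson point process on $[0,\infty)^2$ with intensity $\frac23\,d\sigma\,d\mu$. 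Granting this, $\min_{s}(\nu s+D_s)/t^{1/6}$ converges to $\min_i\big(\frac c3\sigma_i+\mu_i\big)$ over the atoms $(\sigma_i,\mu_i)$ of the limit process, and a short computation for a homogeneous Poisson process shows this minimum has tail $P(\,\cdot>b)=e^{-b^2/c}$ (so it equals $c^{1/2}R$ in law), while, conditionally on the minimum, its argument $\Sigma^*$ is uniform on the line segment $\{(\sigma,\mu):\sigma,\mu\ge0,\ \frac c3\sigma+\mu=b\}$, hence equal to $3c^{-1/2}RU$ with $U$ uniform and independent of $R$. Unwinding the rescalings yields $(L_t-\mathfrak M)/t^{1/6}\to c^{1/2}R$ and $\mathfrak m/t^{5/6}\to\Sigma^*=3c^{-1/2}RU$ jointly, which is \eqref{globmax}. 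The constant $\frac23$ is $\pi^2/c^3$, where $\pi^2$ is the universal excursion-rate constant of the taboo process near its endpoint (equivalently of Bessel$(3)$ near $0$); it combines with the two powers of $L_t$ lost in the excursion rate, the factor $ds$, and $c^3=\frac{3\pi^2}{2}$ exactly as required. The hypothesis $L_t-x\gg t^{1/6}$ enters here: it guarantees $D_0=L_t-x$ is large on the $t^{1/6}$-scale, so the spine's initial position does not itself compete for the maximum and $D$ is effectively in its quasi-stationary regime on the relevant time scale.

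The main obstacle is twofold. On the one hand, justifying the Bessel-plus-drift description of the spine near the barrier requires asymptotics for $q_\tau(y)$ and its logarithmic derivative that are sharp and uniform for $y$ up to $L_\tau$, with errors summable over the $\asymp t^{1/6}$ excursions that contribute to the minimum; this is the technically heaviest input, and is where the argument leans hardest on (and may need to extend) the estimates of \cite{bbs14,ms20}. On the other hand, the convergence of the excursion point process to the Poisson limit --- tightness of the rescaled excursions, identification of the intensity including the constant $\pi^2$, and asymptotic independence of the deep excursions --- is a genuinely excursion-theoretic argument and is where the bulk of the new work lies. The two pieces interact: the quasi-stationary mixing of the taboo process on the scale $L_t^2\ll t^{5/6}$ is what makes the excursions reaching the $t^{1/6}$-scale behave like a Poisson process, but controlling that mixing uniformly as $t\to\infty$ is delicate precisely because the barrier is simultaneously drifting.
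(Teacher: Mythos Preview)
Your proposal is correct in its overall architecture and arrives at the right constants; it is the same strategy as the paper's: spine decomposition, reduction to the spine's maximum via moment bounds on the bushes, restriction of the argmax to times $o(t)$, and an excursion-theoretic computation that produces the Rayleigh law via a Poisson point process of deep excursions near the barrier. Your intensity $\tfrac23\,d\sigma\,d\mu$ and the resulting identification of $(c^{1/2}R,3c^{-1/2}UR)$ are exactly right.

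Where you diverge from the paper is in the parametrization of the two ``hard'' steps, and in both cases the paper's choice sidesteps the difficulties you flag at the end. First, rather than realizing the spine as the $h$-transform by the exact survival probability $q_{t-s}(\cdot)$ and then extracting uniform asymptotics for $\partial_y\log q_{t-s}(y)$ near the barrier, the paper \emph{defines} the spine directly as the time-changed Brownian taboo process $s\mapsto L_t(s)K_{\tau(s)}$ on $[0,1]$ (the eigenfunction $\sin(\pi x/L_t(s))$ replacing $q_{t-s}$) and proves, by a many-to-one formula plus a short Girsanov estimate, that the resulting spinal BBM is close in total variation to the conditioned process on $[0,\delta t]$ (Proposition~\ref{lem:spine_comparison}). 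This avoids any need for sharp derivative estimates on $q$. Second, rather than treating $D_s=L_t(s)-Y_s$ as an approximate Bessel(3) with drift on a moving domain, the paper absorbs the moving barrier into the normalization and the time change $\tau$, so that the excursion analysis takes place for a \emph{fixed} taboo process on $[0,1]$ with the deterministic additive perturbation $\gamma s$, $\gamma=\pi^2/(2L_t)$. Lambert's exact excursion measure for the taboo process (equation~\eqref{Adstar}) then gives the Poisson limit with no approximation of the dynamics needed; the drift of the barrier reappears only as the linear term $\gamma s$ in Lemma~\ref{lem:taboo_with_drift}. In short: your route works, but the paper's normalization converts both of your ``main obstacles'' into essentially exact computations.
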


\begin{Rmk}
Note that the convergence in (\ref{globmax}) can also be written as
\begin{equation}\label{maxrem}
\left(L_t^{-1/2}(L_t - \mathfrak M), L_t^{1/2} \frac{\mathfrak m}{t}\right) \Rightarrow (R, 3UR).
\end{equation}
\end{Rmk}

The proof of Theorem \ref{th:global_max} can be found in section \ref{maxsec}.

\subsection{Related work and comments}

The branching Brownian motion with absorption (and drift) considered in this article is a particular example of a \emph{branching Markov process}, as defined for example in \cite{inw}. Since we are interested in the case of critical drift, it is interesting to compare our results with classical results on \emph{critical} branching processes. The behavior of critical branching processes is well-understood in the case of the classical mono-type branching process, i.e.~the Bienaymé--Galton--Watson process.  Under the assumption that the offspring distribution has finite variance, classical results by Kolmogorov \cite{Kolmogorov1938} and Yaglom \cite{yaglom} show that the probability of survival until generation $n$ decays inversely proportional to $n$ and conditioned on survival, the size of the population, rescaled by $1/n$, converges in law to an exponential distribution. Slack~\cite{Slack1968} extended these results to offspring distributions in the domain of attraction of an $\alpha$-stable law, with $\alpha\in (1,2]$, where the survival probability decays like $1/n^{\alpha-1}$. The books by Harris~\cite{HarrisBook} and Athreya and Ney~\cite{AthreyaNeyBook} contain extensions to finite-type branching processes, under a certain finite-variance condition. Harris~\cite{HarrisBook} also treats a certain class of processes with a countably infinite number of types, which was recently generalized by de Raphélis~\cite{deRaphelis2017,deRaphelis2022}. The latter articles actually prove convergence of the rescaled tree to a limiting tree.  This was done previously for the finite-type case by Miermont~\cite{Miermont2008} ($\alpha=2$) and Berzunza~\cite{Berzunza2018} ($\alpha\in (1,2]$).

Critical branching Markov processes in general type space (and in continuous time) were studied in the 70's by Hering and co-authors \cite{Hering1971,Hering1977,Hering1978,HH1981}. Under a condition on the first moment semigroup called \emph{uniform primitivity}, they obtain asymptotics for the survival probability of order $1/t^{\alpha-1}$, for $\alpha \in (1,2]$, as well as a Yaglom limit theorem. See also Harris, Horton, Kyprianou and 	Wang \cite{HHKW2022} for a different approach using moments and many-to-few formulae, under more restrictive assumptions. The condition of uniform primitivity is verified for branching diffusions on bounded domains under regularity assumptions on the coefficients and the domain, but it is in general not verified for branching diffusions on unbounded domains, such as the one considered in this article.  The notion of \emph{generalized principal eigenvalue} $\lambda_c$ has been succesfully applied to the question of \emph{local} survival vs. local extinction for general branching diffusions on unbounded domains in \cite{EK2004}, and to the generalization of limit theorems in the supercritical case $\lambda_c > 0$ in, for example, \cite{EHK2010}. However, refined limit theorems are missing to this date in the critical case $\lambda_c = 0$. 

In light of these results, the process studied in this article can be considered as a prototypical example of a branching diffusion in an unbounded domain and whose asymptotics are radically different from the cases mentioned above. Indeed, it corresponds to the case $\alpha = 1$, as witnessed by the streched exponential asymptotic for the survival probability from \eqref{eq:survival} and the appearance of an underlying continuous-state branching process driven by a $1$-stable Lévy process, called Neveu's continuous state branching process \cite{ms20}. The results in this article give a precise picture of the behavior of the process conditioned on survival until time $t$.  Underlying Theorem~\ref{th:global_max} is the maybe surprising fact that the process conditioned on survival until time $t$ behaves radically differently from the process conditioned to survive forever\footnote{By this we mean the process conditioned to survive until time $s$ and letting $s\to\infty$.}. Indeed using \eqref{eq:survival} and classical change-of-measure techniques for branching processes (see below), one can show that the latter can be constructed as a BBM with a distinguished particle commonly called a \emph{spine}, moving according to a Bessel-3 process. As seen below, the process conditioned to survive until time $t$ also behaves like a certain spinal process on time-scales small than $t$, but the two spinal processes are comparable only until the time-scale $t^{2/3}$, when their behavior drastically changes. As shown below, this leads to the global maximum appearing at a time-scale $t^{5/6} \ll t$, as shown in Theorem~\ref{th:global_max}.

\subsection{An auxiliary branching Brownian motion with spine}

To prove Theorem \ref{th:global_max}, we will introduce another process, a branching Brownian motion (BBM) with a distinguished particle called the \emph{spine}, which will approximate the original process conditioned on survival until time $t$. Its definition involves the so-called \emph{Brownian taboo process}, introduced by Knight \cite{knight}. 

\paragraph{Brownian taboo process.} Following Knight \cite{knight}, consider standard Brownian motion started at some point $x\in (0,1)$ and killed as soon as it exits the interval $[0,1]$. Then the function $x\mapsto \sin(\pi x)$ is a non-negative eigenfunction of the infinitesimal generator of this process with eigenvalue $-\pi^2/2$. The \emph{Brownian taboo process} is the stochastic process $(K_t)_{t\ge0}$ defined as the Doob $h$-transform of this process with respect to this eigenfunction. Its law satisfies that for every $t\ge0$ and every bounded or positive measurable functional $F$,
\begin{equation}
\label{eq:taboo} \E_x[F((K_s)_{s\in [0,t]})] = \frac{1}{\sin(\pi x)}e^{\frac{\pi^2}{2}t}\E_x[F((B_s)_{s\in[0,t]})\sin(\pi B_t)],
\end{equation}
where $(B_t)_{t\ge0}$ is standard Brownian motion started from $x$. The Brownian taboo process can be interpreted as Brownian motion conditioned to stay forever in the interval $(0,1)$.

It follows from its definition as a Doob transform of Brownian motion that the Brownian taboo process is a diffusion on $(0,1)$ with speed measure $m(dx)$ and scale measure $s(dx)$ given by (see e.g.~\cite[Paragraph II.31]{borodinsalminen})
\[
m(dx) = 2\sin^2(\pi x)\,dx,\quad s(dx) = \frac{1}{\sin^2(\pi x)}\,dx.
\]
Note that $\int_0^1m(dx) = 1$, so that $m$ is the stationary probability for the process. One can check using the formulae in \cite[Paragraph II.6]{borodinsalminen} that the boundary points 0 and 1 are both \emph{entrance-not-exit}, i.e.~the process can be defined starting from 0 or 1 as the weak limit when $x\to0$ or $x\to 1$, but the process never hits $0$ nor $1$ when started inside the interval $(0,1)$. 

\paragraph{Branching Brownian motion with spine.} We define a branching Brownian motion with a distinguished particle called the \emph{spine} as follows:
\begin{itemize}
	\item The process starts at time zero from a single particle, the spine, at a point $x\in [0,L_t]$.
	\item For $s \in [0, t]$, define $$\tau(s) = \int_0^s \frac{1}{L_t(u)^2} \: du.$$  The spine's trajectory is equal in law to the process $(L_t(s) K_{\tau(s)})_{s\in[0,t]}$, where $(K_u)_{u\ge0}$ is the Brownian taboo process on $[0,1]$ started at $x/L_t(0)$.
	\item The spine branches at the accelerated rate $(m+1)\beta$ and according to the size-biased offspring distribution, i.e.~the probability of it having $k$ offspring is $kp_k/(m+1)$, $k=1,2,\ldots$. These offspring are located at same position as their parent.
	\item One of these offspring is chosen randomly to continue as the spine. The others spawn independent branching Brownian motions with drift $-1$ and absorption at $0$ (i.e., independent copies of the process $X$ started from their position).
\end{itemize}
See e.g.~Hardy and Harris \cite{HH} or Harris and Roberts \cite{HR} for a general theory of spine decompositions for branching Markov processes, including a rigorous construction.

For the BBM with spine, we let ${\tilde N}(s)$ be the set of particles alive at time $s$.  For $u \in {\tilde N}(s)$, we let ${\tilde X}_u(s)$ be the location of the particle $u$ at time $s$, and we let ${\tilde X}(s) := \{{\tilde X}_u(s): u \in {\tilde N}(s)\}$.  Let $\xi_s$ be the spine particle at time $s$, and denote the trajectory of the spine by $({\tilde X}_\xi(s))_{s\in[0,t]}$.

The following proposition, which is proved in section \ref{spinesec}, says that up to time $\delta t$ for small $\delta$, the BBM with spine approximates well the BBM conditioned on survival until time $t$.

\begin{Prop}
\label{lem:spine_comparison}
Let $\mu_s$ be the law of $(X(r))_{r\in[0,s]}$ conditioned on $\{\zeta > t\}$, and let $\nu_s$ be the law of $(\tilde X(r))_{r\in[0,s]}$, where both $(X(r))_{r\in[0,s]}$ and $(\tilde X(r))_{r\in[0,s]}$ begin with one particle at $x$.  We assume that $x$ may depend on $t$ but satisfies $\lim_{t \rightarrow \infty} (L_t - x) = \infty$.  Then for every $\eps>0$, there exists $\delta > 0$ such that for sufficiently large $t$, we have
\[
d_{TV}(\mu_{\delta t},\nu_{\delta t}) \le \eps,
\]
where $d_{TV}$ is the total variation distance between measures.
\end{Prop}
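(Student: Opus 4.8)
\emph{Proof strategy.} The plan is to realise both $\mu_s$ and $\nu_s$ (with $s=\delta t$) as changes of measure from the law $\P_x$ of the plain branching Brownian motion with absorption at $0$, restricted to the natural filtration $\mathcal F_s$, and to estimate $d_{TV}(\mu_s,\nu_s)=\tfrac12\,\E_{\P_x}\bigl|\tfrac{d\mu_s}{d\P_x}-\tfrac{d\nu_s}{d\P_x}\bigr|$. Writing $q_r(y):=\P_y(\zeta>r)$, the Markov property and the conditional independence of the sub-families rooted at the particles of $N(s)$ give
\[
\frac{d\mu_s}{d\P_x}\Big|_{\mathcal F_s}=\frac{\P_x(\zeta>t\mid\mathcal F_s)}{\P_x(\zeta>t)}=\frac{1-\prod_{u\in N(s)}\bigl(1-q_{t-s}(X_u(s))\bigr)}{q_t(x)},
\]
while the spine calculus of \cite{HH,HR} identifies $\tfrac{d\nu_s}{d\P_x}$ with $\psi_0(x)^{-1}\sum_{u\in N(s)}\psi_s(X_u(s))$ up to a multiplicative factor $1+O(\delta)$, where $\psi_s(y):=e^{\pi^2\tau(s)/2}\,e^{y}\sin(\pi y/L_t(s))$ on $[0,L_t(s)]$, extended by $0$. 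Indeed, the Doob $h$-transform of Brownian motion with drift $-1$ by the space--time function $\psi$ has drift $\tfrac{\pi}{L_t(s)}\cot(\pi y/L_t(s))$, which is the drift of $(L_t(s)K_{\tau(s)})_s$ up to the term $-\tfrac{\pi^2 y}{2L_t(s)^3}=O(t^{-2/3})$; size-biasing the reproduction gives the accelerated rate $(m+1)\beta$ and the size-biased law; and the prefactor $e^{\pi^2\tau(s)/2}=e^{L_t-L_t(s)}$ makes $\psi$ space--time harmonic for the first-moment semigroup of the branching process up to a creation rate of order $t^{-1}$. A Girsanov/Feynman--Kac estimate absorbs these two corrections, whose combined effect over $[0,\delta t]$ is $O(\delta)$, as long as all ancestral lines stay in the bulk of $(0,L_t(\cdot))$.

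The second ingredient is the sharp behaviour of the survival probability: refining \eqref{eq:survival} (which concerns fixed $x$), estimates of the kind proved in \cite{bbs14,ms20} give $q_r(y)\sim\kappa\,L_r\,e^{y-L_r}\sin(\pi y/L_r)$ with $\kappa=C/\pi$, uniformly over $y$ with $L_r-y\to\infty$, together with matching bounds on $\partial_y\log q_r$. Taking $r=t-s$ and $r=t$ and using $L_t(s)=L_{t-s}=L_t(1+O(\delta))$, this yields $q_{t-s}(y)=\kappa L_t e^{-L_t}\psi_s(y)(1+O(\delta))$ for $y$ in the bulk and $q_t(x)=\kappa L_t e^{-L_t}\psi_0(x)(1+o(1))$ — and this is exactly where the hypothesis $L_t-x\to\infty$ enters: if $x$ stayed within $O(1)$ of $L_t$ then $q_t(x)\asymp1$ while $\psi_0(x)=e^{x}\sin(\pi x/L_t)\to0$, so the asymptotics would degenerate. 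Consequently, on the event that every particle of $N(s)$ has remained in the bulk throughout $[0,s]$,
\[
\frac{d\nu_s}{d\P_x}=(1+O(\delta))\,\frac{\sum_{u\in N(s)}q_{t-s}(X_u(s))}{q_t(x)},
\]
so there $\tfrac{d\mu_s}{d\P_x}$ and $\tfrac{d\nu_s}{d\P_x}$ differ only through the replacement of $1-\prod_u(1-q_{t-s}(X_u(s)))$ by $\sum_u q_{t-s}(X_u(s))$, i.e.\ through the linearisation error, which satisfies $0\le\sum_u q_{t-s}(X_u(s))-\bigl(1-\prod_u(1-q_{t-s}(X_u(s)))\bigr)\le\sum_{u\ne v}q_{t-s}(X_u(s))q_{t-s}(X_v(s))$.

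It remains to control things off the bulk and to bound this error. Fix a slowly growing $A=A(t)\to\infty$ with $A\gg\log t$ (say $A=(\log t)^2$) and let $G\in\mathcal F_s$ be the event that no particle comes within $A$ of the curve $L_t(\cdot)$ before time $s$. Barrier estimates for the conditioned process from \cite{bbs14,ms20} and the explicit description of the spine process — the spine $L_t(s)K_{\tau(s)}$ stays well below $L_t(s)$ because $1-K$ is a Bessel-type process bounded away from $0$, and the non-spine subtrees are subcritical and stay below it — give $\mu_s(G^c)\to0$ and $\nu_s(G^c)\to0$, uniformly over admissible $x$; hence $\E_{\P_x}[\,|\tfrac{d\mu_s}{d\P_x}-\tfrac{d\nu_s}{d\P_x}|\,\1_{G^c}]\le\mu_s(G^c)+\nu_s(G^c)=o(1)$. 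On $G$ every particle satisfies $q_{t-s}(X_u(s))\le q_{t-s}(L_t(s)-A)\asymp Ae^{-A}$, so a many-to-two (second-moment) computation for the branching Brownian motion, combined with the first-moment bound $\E_{\P_x}[\sum_{w\in N(r)}q_{t-r}(X_w(r))\1_{B_r}]\le(1+O(\delta))\,q_t(x)$, where $B_r$ is the event that all particles stay in the bulk up to time $r$ (itself a consequence of the approximate space--time harmonicity of $\psi$), gives
\[
\E_{\P_x}\Bigl[\sum_{u\ne v}q_{t-s}(X_u(s))q_{t-s}(X_v(s))\,\1_G\Bigr]\;\le\;C\,\delta t\,Ae^{-A}\,q_t(x)\;=\;o\bigl(q_t(x)\bigr),
\]
since $\delta t\,Ae^{-A}\to0$ when $A=(\log t)^2$. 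Combining this with $\E_{\P_x}[\tfrac{d\nu_s}{d\P_x}\1_G]\le1$ and the factor $1+O(\delta)$ above, $\E_{\P_x}[\,|\tfrac{d\mu_s}{d\P_x}-\tfrac{d\nu_s}{d\P_x}|\,\1_G]\le o(1)+O(\delta)$, whence $d_{TV}(\mu_{\delta t},\nu_{\delta t})\le o(1)+O(\delta)$, which is $\le\eps$ once $\delta$ is chosen small and $t$ large.

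The main obstacles are the two supporting inputs: the sharp, uniform-in-$x$ asymptotics for $q_r$ (with control of its logarithmic derivatives, needed for the $h$-transform comparison of the spine), and the uniform barrier/Feynman--Kac estimates showing that $G^c$ is negligible under \emph{both} measures and that the moving-boundary corrections in the spine construction cost only $1+O(\delta)$ — both of which lean on the fine analysis of \cite{bbs14,ms20}. The change-of-measure bookkeeping and the many-to-two bound are then routine.
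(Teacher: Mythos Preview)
Your overall architecture---express both laws as Radon--Nikodym derivatives with respect to $\P_x$, identify the spine density with a sum $\psi_0(x)^{-1}\sum_u\psi_s(X_u(s))\approx z_t(x,0)^{-1}Z_t'(s)$, identify the conditional density with $q_t(x)^{-1}\bigl(1-\prod_u(1-q_{t-s}(X_u(s)))\bigr)$, and compare via $q\approx\alpha z_t$---is the same as the paper's, and your Girsanov correction for the moving boundary matches the paper's Lemma~\ref{lem:BM_girsanov}. But your control of the ``good'' event $G$ and your linearisation bound contain a genuine gap.

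You take $G$ to be the event that no particle comes within $A=(\log t)^2$ of the curve $L_t(\cdot)$ at \emph{any} time in $[0,\delta t]$, and claim $\nu_{\delta t}(G^c)\to0$ because ``$1-K$ is a Bessel-type process bounded away from $0$''. This is false. The taboo process is Bessel-3-like only \emph{locally} near the boundary; globally it is positive recurrent, and by the excursion theory of Section~\ref{taboosec}, excursions of $1-K$ below level $d$ occur at rate $\asymp d$ per unit local time. Over $[0,\tau(\delta t)]$ with $\tau(\delta t)\asymp\delta t^{1/3}$, the minimum of $1-K$ is therefore $\Theta_p\bigl((\delta t^{1/3})^{-1}\bigr)$, so the spine's closest approach to the curve is $L_t\cdot\Theta_p\bigl((\delta t^{1/3})^{-1}\bigr)=\Theta_p(1/\delta)$, a \emph{fixed constant}. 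Hence $\nu_{\delta t}(G^c)\to1$, not $0$, for any $A\to\infty$. This breaks your decomposition $\E|\cdot|\le\E[|\cdot|\1_G]+\mu_s(G^c)+\nu_s(G^c)$. And you cannot simply shrink $A$: your many-to-two bound $C\,\delta t\,Ae^{-A}\,q_t(x)$ needs $A\to\infty$ to kill the factor $\delta t$.

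The paper avoids this by using a much weaker good event: it only asks (i) $Z_t(\delta t)\le\eta$, (ii) $M(\delta t)\le L_t(\delta t)-\eta^{-1}$ at the \emph{terminal} time with $\eta^{-1}$ a fixed constant, and (iii) no particle \emph{crosses} $L_t(\cdot)$. The linearisation is not handled by a many-to-two at all; instead the paper invokes Theorem~1.1 of \cite{ms20}, which already gives $\P_x(\zeta>t\mid\mathcal F_{\delta t})=e^{O(\eps)}\alpha Z_t'(\delta t)$ on this event, and the smallness of $Z_t(\delta t)$ under the conditional law comes from Theorem~2.4 of \cite{ms20} (the process $(Z_t(st))_{s\in[0,1)}$ converges to a continuous process starting at $0$). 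In other words, the paper replaces your ``barrier at growing distance $A$ at all times, then many-to-two'' by ``$Z_t(\delta t)$ small in probability, then cite the refined survival asymptotics''. If you want your direct linearisation to work, you would need to control $\E_{\P_x}\bigl[(\sum_u q_{t-s}(X_u(s)))^2\bigr]/q_t(x)\asymp\E_{\nu_s}[Z_t(\delta t)]$ by a different route---essentially reproving the input from \cite{ms20}.
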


\subsection{Sketch of the argument for Theorem \ref{th:global_max} based on excursion theory}

We outline here an argument for Theorem \ref{th:global_max} based on the spine construction and excursion theory for the Brownian taboo process.  Let $(K_s)_{s\ge0}$ be a Brownian taboo process starting from an arbitrary point in the interval $(0,1)$.  Define for $\gamma > 0$,
$$M_\gamma = \min_{s\ge0} \, \big(K_s + \gamma s\big),\qquad m_\gamma = \operatorname*{argmin}_{s\ge0} \, \big(K_s + \gamma s \big).$$
We claim that as $\gamma\to 0$, we have the convergence in law
\begin{equation}\label{Rayleighsimple}
\bigg(\frac{M_\gamma}{\sqrt \gamma}, \sqrt \gamma m_\gamma \bigg) \Rightarrow ({\tilde R},U {\tilde R}),
\end{equation}
where ${\tilde R}$ and $U$ are independent random variables, ${\tilde R}$ has a Rayleigh distribution with density $\pi^2 r e^{-\pi^2r^2/2}$ on $\R_+$, and $U$ has a uniform distribution on $[0,1]$.  A stronger version of (\ref{Rayleighsimple}) will be proved in Lemma \ref{lem:taboo_with_drift} below.

We can understand why (\ref{Rayleighsimple}) should hold by using the excursion theory for the Brownian taboo process, which was developed by Lambert \cite{lambert} and will be reviewed in section \ref{taboosec}.  The Brownian taboo process stays in the interval $(0,1)$, and it can be decomposed according to its excursions away from $1/2$.  We will consider the Poisson point process $\mathcal{N}$ consisting of the points $(u, a_u)$, where $u$ is the local time at $1/2$ associated with a particular excursion of the taboo process below $1/2$, and $a_u$ is the minimum value achieved by the taboo process during this excursion.  As we will see in (\ref{hdlimit}) below, for small $d$, the rate of excursions per unit of local time during which the process goes below $d$ is approximately $\pi^2 d/2$.  Therefore, for small values of the second coordinate, the intensity of the Poisson point process $\mathcal{N}$ is approximately $\pi^2/2$.

When $\gamma$ is small, we expect $M_{\gamma}$ to be close to zero, so we will be concerned primarily with excursions of the Brownian taboo process that get close to zero.  The event $M_{\gamma}/\sqrt{\gamma} \leq x$ is the same as the event that $K_s \leq x \sqrt{\gamma} - \gamma s$ for some $s \geq 0$.  
Also, we will see in (\ref{Ls12}) below that after a large time $s$, the Brownian taboo process will have accumulated a local time of approximately $2s$ by time $s$.  Therefore, if $K_s \leq x \sqrt{\gamma} - \gamma s$, then the excursion underway at time $s$ should correspond to a point $(u, a_u)$ of $\mathcal{N}$ such that $a_u \leq x \sqrt{\gamma} - \gamma u/2$.  That is, we have $M_{\gamma}/\sqrt{\gamma} \leq x$ when there is a point of $\mathcal{N}$ in the triangle in the figure below.  Because this triangle has area $x^2$ and the Poisson point process $\mathcal{N}$ has intensity approximately $\pi^2/2$, we have $P(M_{\gamma}/\sqrt{\gamma} > x) \approx e^{-\pi^2 x^2/2}$, which is the Rayleigh distribution that appears in (\ref{Rayleighsimple}).  Furthermore, conditional on $M_{\gamma}/\sqrt{\gamma} = x$, the excursion that produces the minimum value of $K_s - \gamma s$ should correspond to a point located at a uniform position on the diagonal line in the figure below.  The local time $u$ of this excursion is therefore approximately uniformly distributed on $[0, 2x/\sqrt{\gamma}]$, which means the actual time $s$ of the excursion is approximately uniformly distributed on $[0, x/\sqrt{\gamma}]$, as indicated in (\ref{Rayleighsimple}).

\begin{center}
\setlength{\unitlength}{.25cm}
\thicklines
\begin{picture}(24,12.5)
\put(4,2){\line(1,0){18}}
\put(4,2){\line(0,1){9}}
\put(18,2){\line(-2,1){14}}
\put(22.4,1.6){$u$}
\put(0.5,8.7){\small{$x\sqrt{\gamma}$}}
\put(18,1.5){\line(0,1){1}}
\put(3.5,9){\line(1,0){1}}
\put(3.5,11.5){$a_u$}
\put(16,0.2){{\small $2x/\sqrt{\gamma}$}}
\end{picture}
\end{center}

We now sketch an argument for how to obtain Theorem \ref{th:global_max} from (\ref{Rayleighsimple}).  More details are provided in section \ref{maxsec}. We will argue that the location $M(s)$ of the particle at time $s$ that is furthest from the origin is likely to stay close to the location $X_{\xi}(s)$ of the spinal particle.  We can write $X_{\xi}(s) = L_t(s) (1-K_{\tau(s)})$, where $(K_u)_{u \geq 0}$ is a Brownian taboo process (note that $K$ is a Brownian taboo process if and only if $1-K$ is a Brownian taboo process).  Then
\begin{equation}\label{LX}
\frac{L_t - X_{\xi}(s)}{L_t} = \bigg(1 - \frac{L_t(s)}{L_t} \bigg) + \frac{L_t(s)}{L_t} K_{\tau(s)}.
\end{equation}

The function $\tau$ is strictly increasing on $[0, t]$ with $\tau(t) = (3/c^2) t^{1/3}$, so the inverse function $\tau^{-1}$ is well-defined on the interval $[0, (3/c^2) t^{1/3}]$.  Allowing $s$ and $u$ to vary with $t$ for this paragraph, for $s \ll t$ we have
\begin{equation}\label{tauasymp}
\tau(s) \sim s/L_t^2,
\end{equation}
and therefore if $u \ll t^{1/3}$, then
\begin{equation}\label{tauinverse}
\tau^{-1}(u) \sim L_t^2 u.
\end{equation}
Also, for $s \ll t$, we have $$L_t - L_t(s) \sim \frac{cs}{3t^{2/3}} = \frac{c^3}{3} \cdot \frac{s}{L_t^2} = \frac{\pi^2}{2} \cdot \frac{s}{L_t^2} \sim \frac{\pi^2 \tau(s)}{2}.$$
It follows that for $u \ll t^{1/3}$, we have
\begin{equation}\label{Ltdiff}
L_t - L_t(\tau^{-1}(u)) \sim \frac{\pi^2 u}{2}.
\end{equation}

We can now make the substitution $u = \tau(s)$, and equation (\ref{LX}) becomes
\begin{equation}\label{LXu}
\frac{L_t - X_{\xi}(\tau^{-1}(u))}{L_t} = \bigg(\frac{L_t - L_t(\tau^{-1}(u))}{L_t} \bigg) + \frac{L_t(\tau^{-1}(u))}{L_t} K_{u}.
\end{equation}
Let $$\gamma = \frac{\pi^2}{2L_t}.$$
Using (\ref{Ltdiff}) to approximate the first term on the right-hand side of (\ref{LXu}) and then approximating $L_t(\tau^{-1}(u))/L_t$ by $1$ in the second term, we obtain the approximation $$\frac{L_t - X_{\xi}(\tau^{-1}(u))}{L_t} \approx \gamma u + K_u.$$
Consequently, as long as the branching Brownian motion attains its all-time maximum at approximately the same time that the spine does, we will be able to obtain from (\ref{Rayleighsimple}) that
\begin{equation}\label{LXconv}
\bigg( \frac{L_t - \mathfrak{M}}{L_t \sqrt{\gamma}}, \sqrt{\gamma} \tau(\mathfrak{m}) \bigg) \Rightarrow ({\tilde R}, U {\tilde R}).
\end{equation}
Note that $L_t \sqrt{\gamma} = \sqrt{\pi^2 L_t/2}$.  Also, $\tau(\mathfrak{m}) \approx \mathfrak{m}/L_t^2$ and
\begin{equation}\label{gammaLt2}
\frac{\sqrt{\gamma}}{L_t^2} = \sqrt{\frac{\pi^2 L_t}{2}} \cdot \frac{1}{L_t^3} = \sqrt{\frac{\pi^2 L_t}{2}} \cdot \frac{1}{c^3 t} = \sqrt{\frac{2}{\pi^2}} \cdot \frac{\sqrt{L_t}}{3t}.
\end{equation} 
Because ${\tilde R}\sqrt{\pi^2/2}$ has the same distribution as $R$, the result (\ref{maxrem}) can be obtained from (\ref{LXconv}), and Theorem \ref{th:global_max} follows.

\section{Proof of Proposition 3}\label{spinesec}

Our goal in this section is to prove Proposition \ref{lem:spine_comparison}.  We begin with the following lemma.

\begin{Lemma}
\label{lem:BM_girsanov}
Let $(B_s)_{s\ge0}$ be a standard Brownian motion started from a point $x\in[0,L_t(0)]$ and let $(b_u)_{u\ge0}$ be a standard Brownian motion started from $x/L_t(0)\in [0,1]$. Define
\[
B'_u = \frac{B_{\tau^{-1}(u)}}{L_t(\tau^{-1}(u))}.
\]
Then, there exists a constant $C>0$ such that for every $\delta\in(0,1/2)$, every $t\ge1$, and every positive bounded measurable functional $F$,
\[
\exp(-C(t^{-1/3}+\delta)) \le \frac{\E\left[F((B'_u)_{u\in[0,\tau(\delta t)]})\1_{B'_u\in[0,1]\,\forall u\le \tau(\delta t)}\right]}{\E\left[F((b_u)_{u\in[0,\tau(\delta t)]})\1_{b_u\in[0,1]\,\forall u\le \tau(\delta t)}\right]} \le \exp(Ct^{-1/3}).
\]
\end{Lemma}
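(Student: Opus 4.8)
The plan is to recognise the law of $(B'_u)$ as a Girsanov change of measure away from the law of the plain Brownian motion $(b_u)$, and then to bound the resulting density \emph{pathwise} on the event that the path stays in $[0,1]$; note that the branching and drift structure play no role here, so only the spine's spatial motion is involved.

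\emph{Identifying the SDE for $B'$.} Write $g=\tau^{-1}$, so that $g'(u)=L_t(g(u))^2$. The process $M_u:=B_{g(u)}$ is a continuous local martingale with $\langle M\rangle_u=g(u)$, hence $dM_u=L_t(g(u))\,dW_u$ for a standard Brownian motion $W$. Since $B'_u=M_u/L_t(g(u))$ and $\frac{d}{du}\big(1/L_t(g(u))\big)=-L_t'(g(u))$, the product rule gives
\[
dB'_u=dW_u+\phi(u)\,B'_u\,du,\qquad \phi(u):=-L_t'(g(u))\,L_t(g(u))=\frac{\pi^2}{2L_t(g(u))},
\]
where the last identity uses $L_t(s)=c(t-s)^{1/3}$ and $c^3=3\pi^2/2$; moreover $B'_0=x/L_t(0)=b_0$. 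The drift $\theta(u,y)=\phi(u)y$ is linear in $y$ with $\phi$ bounded on $[0,\tau(\delta t)]$, so the stochastic exponential $D_u:=\exp\big(\int_0^u\phi(v)b_v\,db_v-\frac12\int_0^u\phi(v)^2b_v^2\,dv\big)$ is a true martingale (linear drift / Novikov), and under the measure with density $D_{\tau(\delta t)}$ the canonical process $b$ solves the above SDE. Hence, for every non-negative bounded $F$,
\[
\E\big[F((B'_u)_{u\le\tau(\delta t)})\,\1_{B'\in[0,1]}\big]=\E\big[F((b_u)_{u\le\tau(\delta t)})\,\1_{b\in[0,1]}\,D_{\tau(\delta t)}\big],
\]
so it remains to show that on $A:=\{b_u\in[0,1]\ \forall u\le\tau(\delta t)\}$ one has $\exp(-C(t^{-1/3}+\delta))\le D_{\tau(\delta t)}\le\exp(Ct^{-1/3})$; dividing by $\E[F\1_{b\in[0,1]}]$ then gives the lemma.

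\emph{Pathwise bounds on $D$.} The only non-pathwise term in $\log D_{\tau(\delta t)}$ is the stochastic integral, which we eliminate by integration by parts: from $b_v\,db_v=\tfrac12 d(b_v^2)-\tfrac12 dv$ and then integrating $\phi\in C^1$ against $d(b_v^2)$ by parts (Stieltjes), with $T:=\tau(\delta t)$,
\[
\int_0^T\phi(v)b_v\,db_v=\tfrac12\phi(T)b_T^2-\tfrac12\phi(0)b_0^2-\tfrac12\int_0^T b_v^2\phi'(v)\,dv-\tfrac12\int_0^T\phi(v)\,dv.
\]
On $A$ we have $0\le b_v^2\le1$ and $\phi,\phi'\ge0$. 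The substitution $v=\tau(s)$ yields the exact identity $\int_0^T\phi(v)\,dv=\frac{\pi^2}{2c^3}\log\frac1{1-\delta}=\tfrac13\log\frac1{1-\delta}$ and the bound $T\le\frac{\delta}{c^2(1-\delta)^{2/3}}t^{1/3}$; combined with $0\le\phi(v)\le\frac{\pi^2}{2c((1-\delta)t)^{1/3}}$, $\int_0^T\phi'(v)\,dv=\phi(T)-\phi(0)$, and $\int_0^T\phi(v)^2\,dv\le\phi(T)^2T$, there is a constant $C$ (depending only on $c$) such that, for all $\delta\in(0,1/2)$ and $t\ge1$, each of $\phi(0)$, $\phi(T)$, $\int_0^T\phi(v)^2\,dv$ is at most $Ct^{-1/3}$, while $\int_0^T\phi(v)\,dv\le\tfrac23\delta$. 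Collecting signs in $\log D_T=\int_0^T\phi(v)b_v\,db_v-\tfrac12\int_0^T\phi(v)^2b_v^2\,dv$: all contributions except $\tfrac12\phi(T)b_T^2$ are non-positive, so $\log D_T\le\tfrac12\phi(T)\le Ct^{-1/3}$; and dropping the single non-negative term $\tfrac12\phi(T)b_T^2$ gives $\log D_T\ge-\tfrac12\phi(0)b_0^2-\tfrac12(\phi(T)-\phi(0))-\tfrac12\int_0^T\phi(v)\,dv-\tfrac12\int_0^T\phi(v)^2b_v^2\,dv\ge-C'(t^{-1/3}+\delta)$, as required.

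\emph{Main obstacle.} The argument is essentially computational; the one point that needs genuine care is replacing the stochastic integral $\int\phi\,b\,db$ by a pathwise-bounded expression through integration by parts, so that the two-sided bound on $D_T$ holds on the event $A$ itself and not merely in expectation. One should also verify the time-change bookkeeping behind the SDE for $B'$, and observe that the asymmetry of the exponents — $t^{-1/3}+\delta$ on the lower side versus only $t^{-1/3}$ on the upper side — is real: the extra $\delta$ enters solely through the non-positive term $-\tfrac12\int_0^T\phi(v)\,dv\approx-\tfrac13\delta$, which affects only the lower bound.
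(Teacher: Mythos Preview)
Your proof is correct and follows essentially the same strategy as the paper: identify the Radon--Nikodym density between the laws of $(B'_u)$ and $(b_u)$, then bound this density pathwise on the event that the path stays in $[0,1]$. The paper obtains the density by quoting Lemma~5.3 of \cite{ms20}, which directly produces the pathwise expression $\rho_{\delta t}$ in the original time variable~$s$; you instead derive the SDE for $B'$, apply Girsanov in the time variable~$u$, and then use It\^o's formula and integration by parts to convert the stochastic integral into a pathwise expression. Your density $D_T$ is exactly $\rho_{\delta t}^{-1}$ (with the identification $b_v \leftrightarrow B_{g(v)}/L_t(g(v))$): for instance your term $-\tfrac12\int_0^T\phi(v)\,dv = -\tfrac16\log\tfrac{1}{1-\delta}$ matches the paper's prefactor $(L_t(0)/L_t(\delta t))^{1/2}$, and one checks $\phi'(v)+\phi(v)^2 = -L_t''(g(v))L_t(g(v))^3$, which recovers the $L_t''$ term. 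The pathwise bounds are then carried out in the same way in both arguments, and your observation about the asymmetry of the exponents (the extra $\delta$ coming only from $\int_0^T\phi$) is exactly what the paper encodes in the one-sided inequality \eqref{eq:L}.
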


\begin{proof}
Define $\delta$ and $F$ as in the statement of the lemma.
Define for $s\in [0, t)$,
\[
\rho_s = \bigg( \frac{L_t(0)}{L_t(s)} \bigg)^{1/2} \exp \bigg( \frac{L_t'(s) B_s^2}{2L_t(s)} - \frac{L_t'(0)B_0^2}{2L_t(0)} - \int_0^s \frac{L_t''(u)B_u^2}{2L_t(u)} \: du \bigg).
\]
By Lemma~5.3 in \cite{ms20}, we have
\begin{equation}
\E\left[F((b_u)_{u\in[0,\tau(\delta t)]})\1_{b_u\in[0,1]\,\forall u\le \tau(\delta t)}\right] = \E\left[F((B'_u)_{u\in[0,\tau(\delta t)]})\1_{B'_u\in[0,1]\,\forall u\le \tau(\delta t)}\,\rho_{\delta t}\right].
\end{equation}
It remains to show that  for some $C>0$, we have that $e^{-Ct^{-1/3}} \le \rho_{\delta t} \le e^{C(\delta+t^{-1/3})}$ on the event $\{B'_u\in[0,1]\,\forall u\le \tau(\delta t)\}$. On this event, we have $B_s\in[0,L_t(s)]$ for every $s\le \delta t$, by the definition of $B'_u$. Furthermore, note that
\[
L_t(s) =O(t^{1/3}),\quad |L_t'(s)|=O(t^{-2/3}),\quad|L_t''(s)| =O(t^{-5/3})\quad\text{for all $s\le t/2$.}
\]
Hence, the absolute value of the quantity in the exponential in the definition of $\rho_s$ is bounded by $Ct^{-1/3}$ for some $C>0$, when $s=\delta t$. Furthermore, the first factor in the definition of $\rho_s$ satisfies, when $s=\delta t$,
\begin{equation}
\label{eq:L}
1\le \bigg( \frac{L_t(0)}{L_t(\delta t)} \bigg)^{1/2} = \bigg( \frac{1}{1-\delta} \bigg)^{1/6} \le e^{C\delta},
\end{equation}
for some $C>0$. The statement follows.
\end{proof}

For the original branching Brownian motion process, define, as in \cite{ms20},
\begin{align*}
z_t(x,s) &= L_t(s) \sin \bigg( \frac{\pi x}{L_t(s)} \bigg) e^{x - L_t(s)} \1_{x \in [0, L_t(s)]} \\
Z_t(s) &= \sum_{u \in N(s)} z_t(X_u(s), s) \\
Z_t'(s) &= \sum_{u \in N(s)} z_t(X_u(s), s) \1_{X_u(r)\in [0,L_t(r)]\,\forall r\le s}.
\end{align*}
We recall from \cite{ms20} that these quantities control the survival probabilities of the process.  For example, conditional on the process up to time $s$, the probability that the process survives until time $t$ is approximately proportional to $Z_t(s)$, assuming that all particles are far below $L_t(s)$ at time $s$.

\begin{Lemma}
\label{lem:spine_Z}
There exists $C>0$ such that for every $\delta\in(0,1/2)$, $t\ge1$, and $x\in(0,L_t(0))$, and every positive bounded measurable functional $F$, we have
\[
\exp(-C(t^{-1/3}+\delta)) \le \frac{\E_x\left[F((X(s))_{s\in[0,\delta t]})Z_t'(\delta t)\right]}{\E_x\left[F((\tilde X(s))_{s\in[0,\delta t]})\right] z_t(0,x)} \le \exp(C(t^{-1/3}+\delta)).
\]
\end{Lemma}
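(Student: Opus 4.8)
The goal is to compare a spinal expectation for the BBM with spine against a weighted expectation for the original BBM, where the weight is the truncated additive martingale $Z_t'(\delta t)$. My plan is to use the standard spine change-of-measure: the law of the BBM with spine, when one forgets which particle is the spine, is absolutely continuous with respect to the law of the original BBM, with Radon--Nikodym derivative (up to normalization) given by an additive functional built from the spine's motion. Concretely, the spine moves as $L_t(s)K_{\tau(s)}$ with $K$ a Brownian taboo process, the spine branches at the size-biased accelerated rate, and the many-to-one / spine formula (see Hardy--Harris \cite{HH} or Harris--Roberts \cite{HR}) expresses $\E_x[F((X(s))_{s\le\delta t}) W(\delta t)]$ as a spinal expectation, where $W(s)=\sum_{u\in N(s)} w(X_u(s),s)\mathds{1}_{X_u(r)\in[0,L_t(r)]\,\forall r\le s}$ and $w$ is a space-time harmonic function for the single-particle motion killed outside the tube $\{(r,y):y\in[0,L_t(r)]\}$. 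So the first step is to identify which additive martingale corresponds to the spine's motion as defined in the paper, and check it is exactly (a constant multiple of) $z_t(\cdot,\cdot)$.

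The key identity is that $z_t(x,s)$ is, up to the deterministic prefactor coming from the time-change, precisely the function one gets by taking the taboo eigenfunction $\sin(\pi\cdot)$ of the $h$-transform, undoing the time change $\tau$, and undoing the Girsanov tilt that relates drift-$(-1)$ Brownian motion in the tube to the taboo process. This is exactly the content of Lemma~\ref{lem:BM_girsanov}: that lemma shows the law of the rescaled, time-changed single particle $B'_u = B_{\tau^{-1}(u)}/L_t(\tau^{-1}(u))$ killed outside $[0,1]$ agrees, up to multiplicative errors $e^{\pm C(t^{-1/3}+\delta)}$, with the law of plain Brownian motion $b_u$ killed outside $[0,1]$, with Radon--Nikodym factor $\rho_{\delta t}$ bounded between $e^{-Ct^{-1/3}}$ and $e^{C(\delta+t^{-1/3})}$. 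Applying the taboo $h$-transform formula \eqref{eq:taboo} to $b$ then introduces the factor $\sin(\pi b_{\tau(\delta t)})e^{(\pi^2/2)\tau(\delta t)}$ relative to the taboo law; translating back through $B'$ and the scaling $x\mapsto L_t(s)x$, and noting $\sin(\pi B'_u) = \sin(\pi B_{\tau^{-1}(u)}/L_t(\tau^{-1}(u)))$ and the exponential $e^{x-L_t(s)}$ appearing from the drift Girsanov change, reproduces precisely $z_t(X_\xi(\delta t),\delta t)/z_t(x,0)$ up to the same multiplicative errors. So the second step is: combine the spine many-to-one formula with Lemma~\ref{lem:BM_girsanov} and \eqref{eq:taboo} to rewrite the left-hand ratio.

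More precisely, I would proceed as follows. By the spine decomposition, $\E_x[F((\tilde X(s))_{s\le\delta t})]$ equals a single-particle expectation over the spine trajectory $(\tilde X_\xi(s))_{s\le\delta t}$ of a quantity $\Phi(F)$ obtained by integrating out the independent subtrees hanging off the spine. The many-to-one formula for the original BBM (with the tube-killing) states that $\E_x[\Phi(F)\,z_t(X_\xi(\delta t),\delta t)\,/\,z_t(x,0)]$ — where now the spine moves as drift-$(-1)$ BM killed outside the tube and $h$-transformed by $z_t$ — equals $\E_x[F((X(s))_{s\le\delta t}) Z_t'(\delta t)]/z_t(x,0)$, because branching at the size-biased accelerated rate along an $h$-transformed spine is exactly the Girsanov/spine recipe that produces the additive-martingale-weighted law. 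The only discrepancy between the genuine BBM-with-spine (whose spine is $L_t(s)K_{\tau(s)}$) and this $z_t$-$h$-transformed spine is precisely the ratio controlled by Lemma~\ref{lem:BM_girsanov} together with \eqref{eq:taboo}: the $h$-transform of tube-killed drift BM by $z_t$ is, after the scaling $y\mapsto y/L_t(s)$ and the time change $u=\tau(s)$ and the drift-removal Girsanov, the $h$-transform of $[0,1]$-killed plain BM by $\sin(\pi\cdot)$, i.e. the taboo process — up to the factor $\rho_{\delta t}\in[e^{-Ct^{-1/3}},e^{C(\delta+t^{-1/3})}]$. Pushing this bound through the (positive) functional gives the claimed two-sided bound with constant $C$ absorbing everything. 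I would also need to observe that the branching rate and offspring law are literally identical in the two spine constructions (both use rate $(m+1)\beta$ and the size-biased law, both place offspring at the parent's location), so no error arises from the branching mechanism — only from the spine's spatial motion.

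The main obstacle I expect is bookkeeping the change of variables cleanly: one has to track simultaneously the deterministic scaling $y\mapsto y/L_t(s)$ (which contributes the $(L_t(0)/L_t(s))^{1/2}$ prefactor and the $L_t'(s)y^2/(2L_t(s))$ quadratic terms via Itô, exactly the terms appearing in $\rho_s$), the time change $\tau$, and the Girsanov factor removing the drift $-1$ (which produces the $e^{x-L_t(s)}$ factor and, crucially, combines with the quadratic correction to leave only an $O(t^{-1/3})$ net exponent once one is on the event that the particle stays in the tube). The fact that the tube $[0,L_t(r)]$ has slowly-varying width — so that $L_t'(s)=O(t^{-2/3})$ and $L_t''(s)=O(t^{-5/3})$ on $[0,\delta t]\subset[0,t/2]$ — is what makes all these corrections $O(t^{-1/3}+\delta)$ rather than $O(1)$; this is already packaged in Lemma~\ref{lem:BM_girsanov}, so in fact the bulk of the analytic work has been done there and the present lemma is mostly a matter of assembling the spine many-to-one formula with that estimate and \eqref{eq:taboo}. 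A secondary point of care is that $F$ is only assumed positive and bounded and the indicator in $Z_t'$ restricts to trajectories staying in the tube, which is exactly the event on which Lemma~\ref{lem:BM_girsanov}'s bound is valid, so the restriction is harmless; and one must check the endpoint behaviour $x\to0$ or $x\to L_t(0)$ using that $0$ and $1$ are entrance points for the taboo process, so the spine started at such a point is well defined as a weak limit and the identity passes to the limit by monotone/dominated convergence.
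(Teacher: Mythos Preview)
Your proposal is correct and follows essentially the same route as the paper: both use the many-to-one formula with the drift-removing Girsanov martingale $e^{X_u(s)-x+s/2}$ to pass to a spine process with standard Brownian spine, then apply Lemma~\ref{lem:BM_girsanov} to absorb the rescaling/time-change error $e^{O(\delta+t^{-1/3})}$, and finally invoke the taboo $h$-transform \eqref{eq:taboo} to reach $\tilde X$. The paper organizes this via two explicitly named intermediate spine processes ($X'$ with standard-BM spine and $X''$ with scaled/time-changed-BM spine), which keeps the bookkeeping cleaner than your phrasing ``$h$-transformed by $z_t$'' (which is slightly imprecise since $z_t$ is only approximately space-time harmonic---exactly the source of the error term); your endpoint-behaviour remark is unnecessary since the hypothesis is $x\in(0,L_t(0))$ strictly.
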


\begin{proof}
Let $\delta$, $t$, $x$, and $F$ be defined as in the statement of the lemma.
Throughout the proof, we will make use of two auxiliary branching processes with spine, denoted by $X'$ and $X''$, with the position of their respective spines denoted by $X_\xi'$ and $X_\xi''$. These processes are defined in essentially the same way as $\tilde X$, but with the following differences:
\begin{itemize}
\item In $X'$, the spine performs standard Brownian motion started at $x$ and killed at 0.
\item In $X''$, the spine's motion satisfies $X_\xi''(s) = L_t(s)B''_{\tau(s)}$ for a Brownian motion $(B''_u)_{u\ge0}$ started at $x/L_t(0)$ and killed at 0.
\end{itemize}
We now relate these processes to the original process $X$ using a \emph{many-to-one lemma}. We use the version from Harris and Roberts \cite{HR}; see Section 4.1 in that paper. The first step is to define a branching process with spine through a measure change of the original process using a martingale of the form
\[
\sum_{u \in N(s)} \zeta(u, s) e^{-\beta m s},
\]
where we recall that $\beta$ is the branching rate, $m+1$ is the mean of the offspring distribution and $\beta m = 1/2$ by assumption. 
In our case, we set
\begin{align*}
\zeta(u, s) &= e^{X_u(s) - X_u(0) + s/2},\quad s\ge 0,\ i\le N(s),
\end{align*}
and note that this corresponds to a martingale which transforms the law of a Brownian motion with drift $-1$ to the law of a Brownian motion without drift. With this definition, the law  $\mathbb Q^1_x$ from Harris and Roberts \cite{HR} is precisely the law of our process $X'$. Furthermore, for some positive functional $G$ and for all $u \in N(\delta t)$, set
\begin{align*}
Y(u) &=  F((X(s))_{s\in[0,\delta t]})G((X_u(s))_{s\in[0,\delta t]})e^{X_u(\delta t)} \\
&= F((X(s))_{s\in[0,\delta t]})G((X_u(s))_{s\in[0,\delta t]})e^{X_u(0)}\zeta(u,\delta t) e^{-\delta t/2}.
\end{align*}
By the Many-to-one lemma from \cite[Section 4.1]{HR}, we then have that 
\begin{align}
\label{eq:many_to_one}
\E_x\left[\sum_{u \in N(\delta t)} Y(u)\right]
= e^x \E_x\left[F((X'(s))_{s\in[0,\delta t]})G((X_\xi'(s))_{s\in[0,\delta t]})\right].
\end{align}
We now specialize this to 
\[
G((x(s))_{s\in[0,\delta t]}) = \sin\left(\frac{\pi x(\delta t)}{L_t(\delta t)}\right) \1_{x(s)\in [0,L_t(s)]\,\forall s\le \delta t},
\]
noting that with this definition, we have
\[
Z_t'(\delta t) = L_t(\delta t)e^{-L_t(\delta t)} \sum_{u \in N(\delta t)} e^{X_u(\delta t)}G((X_u(s))_{s\in[0,\delta t]}).
\]
Using \eqref{eq:many_to_one},  we get
\begin{equation}
\label{eq:Xprime}
\E_x\left[F((X(s))_{s\in[0,\delta t]})Z_t'(\delta t)\right] 
= L_t(\delta t)e^{x-L_t(\delta t)} \E_x\left[F((X'(s))_{s\in[0,\delta t]})G((X_\xi'(s))_{s\in[0,\delta t]})\right].
\end{equation}
Conditioning on $X_\xi'$ in the expectation on the right-hand side of \eqref{eq:Xprime} and applying Lemma~\ref{lem:BM_girsanov} with $(X_\xi'(s))_{s\ge0}$ taking the role of $(B_s)_{s\ge0}$ and $(B''(u))_{u \geq 0}$ taking the role of $(b_u)_{u \geq 0}$ gives
\begin{align}
\label{eq:Xprimeprime}
&\E_x\left[F((X'(s))_{s\in[0,\delta t]})G((X_\xi'(s))_{s\in[0,\delta t]})\right] \nonumber \\
&\qquad = e^{O(\delta+t^{-1/3})}\E_x\left[F((X''(s))_{s\in[0,\delta t]})\sin(\pi B''_{\tau(\delta t)})\1_{B''_u \in [0,1]\,\forall u\in [0,\tau(\delta t)]}\right]. 
\end{align}
Finally, recall that the Brownian taboo process is obtained from Brownian motion killed outside the interval $[0,1]$ by a Doob transform using the $-(\pi^2/2)$-harmonic function $\sin(\pi x)$; see \eqref{eq:taboo}. This gives
\begin{align}
\label{eq:Xtilde}
&\E_x\left[F((X''(s))_{s\in[0,\delta t]})\sin(\pi B''_{\tau(\delta t)})\1_{B''_u \in [0,1]\,\forall u\in [0,\tau(\delta t)]}\right] \nonumber \\
&\qquad \qquad = e^{-\pi^2 \tau(\delta t)/2}\sin\bigg( \frac{\pi x}{L_t(0)} \bigg) \E_x\left[F((\tilde X(s))_{s\in[0,\delta t]})\right].
\end{align}
Collecting \eqref{eq:Xprime}, \eqref{eq:Xprimeprime} and \eqref{eq:Xtilde}, and noting that $L_t(\delta t) = L_t(0)e^{O(\delta)}$ and $e^{-\pi^2 \tau(\delta t)/2} = e^{L_t(\delta t)-L_t(0)}$, we get
\[
\E_x\left[F((X(s))_{s\in[0,\delta t]})Z_t'(\delta t)\right]  = e^{O(\delta+t^{-1/3})} z_t(0,x) \E_x\left[F((\tilde X(s))_{s\in[0,\delta t]})\right],
\]
which was to be proven.
\end{proof}

\begin{proof}[Proof of Proposition~\ref{lem:spine_comparison}]
Let $\eps,\eta,\delta >0$. In the course of the proof, we will choose $\eta$ small depending on $\eps$, and then $\delta$ small depending on $\eps$ and $\eta$. Define the event
\[
G = \{Z_t(\delta t) \le \eta,\ M(\delta t) \le L_t(\delta t) - \eta^{-1},\ Z_t(\delta t) = Z_t'(\delta t)\}
\]
By the assumption on the initial condition and results from Maillard and Schweinsberg~\cite{ms20}, we will show below that for every $\eps,\eta>0$ there exists $\delta>0$ such that
\begin{equation}
\label{eq:prob_G_delta}
\P_x(G\,|\,\zeta > t) \ge 1-\eps.
\end{equation}
Furthermore, by the second part of Theorem~1.1 in \cite{ms20}, there exists a constant $\alpha\in(0,\infty)$ such that, choosing $\eta$ small enough, we have for large enough $t$,
\begin{equation}
\label{eq:prob_zeta_ge_t}
\P_x(\zeta > t) = e^{O(\eps)} \alpha z_t(0,x)
\end{equation}
and
\begin{equation}
\label{eq:cond_G_delta}
\P_x(\zeta > t\,|\,\mathcal F_{\delta t})\1_{G} = e^{O(\eps)} \alpha Z_t'(\delta t)\1_{G}.
\end{equation}

We now wrap up the proof, assuming~\eqref{eq:prob_G_delta}. It is enough to show that for every functional $H$ with $0\le H\le 1$, we have for large enough $t$,
\begin{equation}
\label{eq:toshow}
\E_x[H((X(r))_{r\in[0,\delta t]})\,|\,\zeta>t] \le \E_x[H((\tilde X(r))_{r\in[0,\delta t]})] e^{O(\eps+\delta)} + O(\eps+\delta).
\end{equation}
Let $H$ be such a functional. By \eqref{eq:prob_G_delta}, we have
\begin{equation}
\label{eq:exp_H_G_delta}
\E_x[H((X(r))_{r\in[0,\delta t]})\,|\,\zeta>t] = \E_x[H((X(r))_{r\in[0,\delta t]})\1_{G}\,|\,\zeta>t] + O(\eps).
\end{equation}
Furthermore, by \eqref{eq:prob_zeta_ge_t} and \eqref{eq:cond_G_delta}, we have
\begin{align*}
\E_x[H((X(r))_{r\in[0,\delta t]})\1_{G}\,|\,\zeta>t] &= \frac{\E_x[H((X(r))_{r \in [0, \delta t]}) \1_G \P_x(\zeta > t \,|\, \mathcal{F}_{\delta t})]}{\P_x(\zeta > t)} \\
&= \frac{e^{O(\eps)} \E_x[H((X(r))_{r \in [0, \delta t]} \1_G Z_t'(\delta t)]}{z_t(0, x)} \\
&\le \frac{e^{O(\eps)}\E_x[H((X(r))_{r\in[0,\delta t]})Z_t'(\delta t)]}{z_t(0,x)},
\end{align*}
and Lemma~\ref{lem:spine_Z} immediately yields for large enough $t$,
\begin{equation}
\label{eq:almost_done}
\E_x[H((X(r))_{r\in[0,\delta t]})\1_{G}\,|\,\zeta>t] \le e^{O(\eps+\delta)} \E_x[H((\tilde X(r))_{r\in[0,\delta t]})].
\end{equation}
Equations~\eqref{eq:exp_H_G_delta} and \eqref{eq:almost_done} now yield \eqref{eq:toshow}, which finishes the proof of the lemma, assuming \eqref{eq:prob_G_delta}.

We finish by proving \eqref{eq:prob_G_delta}. We bound the probability of the complement in three steps. Recall that we assume that $L_t-x\to\infty$ as $t\to\infty$, which in particular implies that $z_t(0,x)\to 0$ as $t\to\infty$. Theorem~2.4 in \cite{ms20} then says that the finite-dimensional distributions of the process $(Z_t(st))_{s\in [0,1)}$, conditioned on $\zeta > t$, converge, as $t\to\infty$, to those of a certain stochastically continuous process starting from 0. This entails that for every $\eps,\eta>0$, one can choose $\delta$ small enough such that for large enough $t$,
\[
\P_x(Z_t(\delta t) > \eta\,|\,\zeta>t) \le \eps.
\]
Furthermore, Theorem~2.9 in \cite{ms20} yields that for large enough $t$,
\[
\P_x(M(\delta t) > L_t(\delta t) - \eta^{-1}\,|\,\zeta>t) \le \eps.
\]
Finally, Lemma~5.8 in \cite{ms20} applied with $r=A=0$ and $s=\delta t$ gives that for large enough $t$,
\[
\P_x(M(s) > L_t(s)\,\text{for some $s\in[0,\delta t]$}) = O(\delta z_t(0,x)).
\]
Using \eqref{eq:prob_zeta_ge_t} and using that $Z_t(\delta t) = Z_t'(\delta t)$ on the event that $M(s) \le L_t(s)$ for all $s\in[0,\delta t]$, this implies that
\[
\P_x(Z_t(\delta t) \ne Z_t'(\delta t)\,|\,\zeta > t) = O(\delta).
\]
It now follows that one can choose $\delta$ small enough such that for large enough $t$, the result \eqref{eq:prob_G_delta} holds. This finishes the proof.
\end{proof}

\section{Excursions of the Brownian taboo process}\label{taboosec}

The main goal of this section is to prove the following lemma, which generalizes (\ref{Rayleighsimple}) in the introduction and is the key to the proof of Theorem \ref{th:global_max}.

\begin{Lemma}
\label{lem:taboo_with_drift}
Suppose that for every $\gamma>0$, we have a Brownian taboo process $(K_\gamma(s))_{s\ge0}$ starting a (possibly random) point $z_\gamma \in [0,1]$. Assume that we have the following limit in probability:
\begin{equation}\label{initialz}
\lim_{\gamma \rightarrow 0} \frac{z_{\gamma}}{\sqrt{\gamma}} = \infty.
\end{equation}
Suppose furthermore we have a (possibly random) function $g: \R^+ \rightarrow \R^+$ such that, in probability,
\begin{equation}\label{gcond}
\lim_{\gamma \rightarrow 0} \gamma^{1/2} g(\gamma) = \infty,
\end{equation}
and for all $\gamma > 0$, we have (possibly random) continuous functions $b_{\gamma}, d_{\gamma}: [0, g(\gamma)] \rightarrow [0, \infty)$ such that the following limits hold in probability:
\begin{equation}\label{adhyp}
\lim_{\gamma \rightarrow 0} \sup_{0 \leq s \leq g(\gamma)} |1 - b_{\gamma}(s)| = 0, \qquad \lim_{\gamma \rightarrow 0} \sup_{0 < s \leq g(\gamma)} \bigg|1 - \frac{d_{\gamma}(s)}{\gamma s} \bigg| = 0.
\end{equation}
Define for $\gamma > 0$,
\begin{equation}\label{Mmdef}
M_\gamma = \min_{0 \leq s \leq g(\gamma)} \, \big(b_{\gamma}(s) K_s + d_{\gamma}(s)\big),\qquad m_\gamma = \operatorname*{arg\,min}_{0 \leq s \leq g(\gamma)} \, \big(b_{\gamma}(s) K_s + d_{\gamma}(s) \big).
\end{equation}
Then, as $\gamma\to 0$, we have the convergence in law
\begin{equation}\label{Rayleighconv}
\bigg(\frac{M_\gamma}{\sqrt \gamma}, \sqrt \gamma m_\gamma \bigg) \Rightarrow ({\tilde R},U {\tilde R}),
\end{equation}
where ${\tilde R}$ and $U$ are independent random variables, ${\tilde R}$ has a Rayleigh distribution with density $\pi^2 r e^{-\pi^2r^2/2}$ on $\R_+$, and $U$ has a uniform distribution on $[0,1]$.  Furthermore, define for $\theta > 0$,
$$M_{\gamma}^*(\theta) = \min_{\substack{0 \leq s \leq g(\gamma) \\ |s - m_{\gamma}| \geq \theta/\sqrt{\gamma}}}
\big(b_{\gamma}(s) K_s + d_{\gamma}(s)\big).$$ 
Then, for every $\kappa>0$, there exists $\eta > 0$, such that for every $\theta > 0$,
\begin{equation}\label{secondexc}
\limsup_{\gamma \rightarrow 0} P(M_{\gamma}^*(\theta) - M_{\gamma} \leq \eta \sqrt{\gamma}) < \kappa.
\end{equation}
\end{Lemma}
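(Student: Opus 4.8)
The plan is to reduce (\ref{Rayleighconv}) and (\ref{secondexc}) to a single computation for the Poisson point process of excursions of the Brownian taboo process below $1/2$, obtained from the excursion theory of Lambert \cite{lambert}.

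\emph{Step 1: reduction to the clean minimum $\min_{0\le s\le g(\gamma)}(K_s+\gamma s)$.} On an event of probability tending to $1$, hypotheses (\ref{adhyp})--(\ref{initialz}) let us replace $b_\gamma(s)K_s+d_\gamma(s)$ by $K_s+\gamma s$ at the cost of an error that is $o_p(\sqrt\gamma)$ in value and $o_p(1/\sqrt\gamma)$ in location. Indeed, on this event one has the crude bounds $\tfrac12(K_s+\gamma s)\le b_\gamma(s)K_s+d_\gamma(s)\le2(K_s+\gamma s)$ for $s\le A/\sqrt\gamma$ and $b_\gamma(s)K_s+d_\gamma(s)\ge\tfrac14A\sqrt\gamma$ for $s>A/\sqrt\gamma$, so (together with a direct estimate, or with Step 2) both minima are $O_p(\sqrt\gamma)$, both argmins are $O_p(1/\sqrt\gamma)$, and large $s$ never realise the argmin once $A$ is large; on the minimiser, $K_{m_\gamma}=O_p(\sqrt\gamma)$, whence $(b_\gamma(m_\gamma)-1)K_{m_\gamma}$ and $d_\gamma(m_\gamma)-\gamma m_\gamma$ are both $o_p(1)\cdot O_p(\sqrt\gamma)=o_p(\sqrt\gamma)$, with matching bounds uniformly over $s\le A/\sqrt\gamma$. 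I would also show, using the scale function of the taboo process (with $s(dx)=\sin^{-2}(\pi x)\,dx$, so that $P(\text{taboo from }z\text{ reaches level }a\text{ before }1/2)\asymp a/z$ for $a\ll z\ll1$, and the probability being $0$ if $z\ge1/2$) together with (\ref{initialz}), that the probability the minimum is attained before the first hitting time $\sigma_1$ of $1/2$ is $O(\sqrt\gamma/z_\gamma)\to0$; hence we may restart the taboo process from $1/2$.

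\emph{Step 2: the Poisson point process and the limit.} After $\sigma_1$ the excursions of $K$ below $1/2$ form a Poisson point process $\{(u,a_u)\}$ on $\R_+\times[0,1/2)$ in the local time $u$ at $1/2$, with $a_u$ the minimum of $K$ over the excursion and intensity $du\otimes\nu(da)$, where $\nu((0,d])=n(\inf<d)\sim\tfrac{\pi^2}{2}d$ as $d\to0$; see (\ref{hdlimit}). Using (\ref{Ls12}), the inverse local time satisfies $\tau_u\sim u/2$ uniformly over the relevant range, so each excursion contributes $\min_{s\in e_u}(K_s+\gamma s)=a_u+\gamma\tau_u+o_p(\sqrt\gamma)=a_u+\gamma u/2+o_p(\sqrt\gamma)$ (its duration being $O_p(1)$), while parts of the path with $K_s\ge1/2$ contribute at least $1/2\gg\sqrt\gamma$; hence $\min_{0\le s\le g(\gamma)}(K_s+\gamma s)=\min_u(a_u+\gamma u/2)+o_p(\sqrt\gamma)$. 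Rescaling, the point measure $\mathcal N_\gamma:=\sum_u\delta_{(\sqrt\gamma\,u/2,\;a_u/\sqrt\gamma)}$ is again a Poisson point process, and from $\nu((0,d])\sim\tfrac{\pi^2}{2}d$ its intensity converges vaguely on $\R_+\times[0,\infty)$ to $\pi^2\,dt'\,da'$. For $\mathcal N$ a Poisson point process of intensity $\pi^2$ on $\R_+^2$, a direct computation gives that the minimum $V$ of $t'+a'$ over the atoms satisfies $P(V>v)=e^{-\pi^2v^2/2}$ (the sublevel set $\{t'+a'\le v\}$ has area $v^2/2$) and that, conditionally on $V$, the $t'$-coordinate $T$ of the a.s.\ unique minimising atom is uniform on $[0,V]$; that is, $(V,T)\deq(\tilde R,U\tilde R)$.

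\emph{Step 3: continuous mapping and the separation estimate.} Since the map sending a configuration to the value and $t'$-coordinate of the minimiser of $t'+a'$ is a.s.\ continuous at $\mathcal N$ --- a.s.\ the minimiser is unique and there is a strictly positive gap to the second-smallest value --- the convergence $\mathcal N_\gamma\Rightarrow\mathcal N$, combined with Step 1 and the $o_p(\sqrt\gamma)$ errors of Step 2 (which perturb values by $o_p(1)$ after dividing by $\sqrt\gamma$, the positive limiting gap ensuring the argmin is unaffected with probability $\to1$), yields $(M_\gamma/\sqrt\gamma,\sqrt\gamma\,m_\gamma)\Rightarrow(\tilde R,U\tilde R)$, which is (\ref{Rayleighconv}). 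For (\ref{secondexc}), note that since the minimising excursion reaches a level tending to $0$ in probability and excursions of the taboo process reaching a level below a fixed $\delta>0$ have tight duration (a consequence of the entrance property of the boundary point $0$ and of \cite{lambert}), for each fixed $\theta>0$ the minimising excursion has duration $<\theta/\sqrt\gamma$ with probability $\to1$; on that event $\{s:|s-m_\gamma|\ge\theta/\sqrt\gamma\}$ meets only non-minimising excursions, so $\{M_\gamma^*(\theta)-M_\gamma\le\eta\sqrt\gamma\}$ is contained, up to a negligible event, in the $\theta$-free event that some non-minimising excursion has value at most $M_\gamma+\eta\sqrt\gamma$. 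The probability of the latter converges to $P(\mathrm{gap}\le\eta)$ for $\mathcal N$, and conditioning on $V\le v_0$ with $v_0$ large, the number of atoms with value in $(V,V+\eta]$ is Poisson with mean at most $\tfrac{\pi^2}{2}(2v_0\eta+\eta^2)$, so $P(\mathrm{gap}\le\eta)<\kappa$ once $\eta$ is small depending only on $\kappa$; since this bound does not involve $\theta$, (\ref{secondexc}) follows. The main obstacle is the uniform bookkeeping in Steps 1--2: ensuring that the substitution of $K_s+\gamma s$ for $b_\gamma(s)K_s+d_\gamma(s)$, the replacement of $\tau_u$ by $u/2$, the $O_p(1)$ excursion durations, the tail $s\gg1/\sqrt\gamma$, and the initial segment before $\sigma_1$ all contribute errors that are genuinely $o_p(\sqrt\gamma)$ in value (hence $o_p(1)$ after rescaling) and do not move the argmin --- this is precisely where the sharp asymptotics (\ref{hdlimit}) and (\ref{Ls12}) for the taboo excursion measure and local time, and the uniqueness-with-gap property of the limiting Poisson configuration, are used.
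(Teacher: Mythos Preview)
Your proposal is correct and follows essentially the same approach as the paper: reduce to the clean functional $K_s+\gamma s$, pass to the Poisson point process of excursion minima via Lambert's excursion theory and (\ref{Ls12}), identify the limiting Rayleigh--uniform law by a direct area computation, and derive (\ref{secondexc}) from the strictly positive gap in the limit. The paper separates the Poisson computation into a standalone lemma and, to make the substitution $\sigma_u\sim u/2$ uniform, discards not just the segment before the first visit to $1/2$ but the larger interval $[0,\sigma_{\gamma^{-1/4}}]$ (handled by a first-moment bound); your argument implicitly needs the same small-$u$ cutoff, but otherwise the strategies coincide.
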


The main tool used in the proof of Lemma~\ref{lem:taboo_with_drift} is the excursion theory of the Brownian taboo process due to Lambert \cite{lambert}, who also extended the results to more general completely asymmetric L\'evy processes. We review this theory here.

Let $(K_s)_{s \geq 0}$ be a Brownian taboo process.  For $0 < x < 1$, let
\begin{equation}\label{px}
p(x) = 2 \sin^2 (\pi x),
\end{equation}
so that $p(x)$ is the density of the speed measure $m(dx)$ and therefore is the stationary density for the process; see Theorem 3.1 of \cite{lambert}.  Denote the local time at $x \in (0, 1)$ by $$\ell_s^{(x)} = \lim_{\eps \rightarrow 0+} \frac{1}{2 \eps} \int_0^s \1_{\{|K_r - x| < \eps\}} \: dr.$$  Corollary 4.3 of \cite{lambert} states that
\begin{equation}\label{Lsx}
\lim_{s \rightarrow \infty} \frac{\ell_s^{(x)}}{s} = p(x) \hspace{.2in}\mbox{a.s.}
\end{equation}
Let $\sigma_u^x = \inf\{s > 0: \ell^{(x)}_s > u\}$ be the (right-continuous) inverse local time.  If $\sigma_{u-}^x < \sigma_u^x$, then let $e_u$ be the excursion defined by $e_u(r) = K_{\sigma_{u-}^x + r}$ for $0 \leq r \leq \sigma_{u-}^x - \sigma_u^x$, and $e_u(r) = x$ for all $r > \sigma_{u-}^x - \sigma_u^x$.  Let $\mathcal{U}$ denote the set of all continuous functions $f: [0, \infty) \rightarrow (0, 1)$ such that for some $y > 0$, we have $f(0) = f(y) = x$, $f(r) \neq x$ for all $r \in (0, y)$, and $f(r) = x$ for all $r > y$.  We interpret $\mathcal{U}$ as the space of possible excursions, with $y$ corresponding to the excursion length.  Then the set of points $(u, e_u)$ is a Poisson point process on $\R^+ \times \mathcal{U}$ whose intensity measure is given by the product of Lebesgue measure and an excursion measure on $\mathcal{U}$ which, following \cite{lambert}, we denote by $n_x$.  For $0 < d < 1 - x$, let $$A_d = \Big\{f \in \mathcal{U}: \sup_{r \geq 0} f(r) > 1 - d\Big\}$$ be the set of excursions whose maximum exceeds $1 - d$.  Proposition 4.2 of \cite{lambert}, applied with $1 - d - x$ in place of $\eta$ and using the values of $\rho$ and $W^{-(\rho)}(x)$ from the top of p. 256 in \cite{lambert}, states that
$$n_x(A_d) = \frac{\pi}{2} \cdot \frac{\sin (\pi(1-d))}{\sin (\pi(1-d-x)) \sin (\pi x)}.$$

We now consider the case $x = 1/2$.  By symmetry, if $0 < d < 1/2$ and we define
$$A_d^* = \Big\{f \in \mathcal{U}: \inf_{r \geq 0} f(r) < d\Big\},$$
then
\begin{equation}\label{Adstar}
n_{1/2}(A_d^*) = n_{1/2}(A_d) = \frac{\pi}{2} \cdot \frac{\sin(\pi d)}{\sin(\frac{\pi}{2} - \pi d)}.
\end{equation}
For the excursion $e_u$, write $a_u = \inf_{r \geq 0} e_u(r)$.  Let $\mathcal N$ be the point process on $\R^+ \times (0, 1/2)$ consisting of the points $(u, a_u)$ for which $a_u < 1/2$, so we are recording here only excursions below $1/2$.  Then $\mathcal N$ is a Poisson point process whose intensity measure is given by the product of Lebesgue measure and a measure $\nu$ such that $\nu((0, d)) = n_{1/2}(A_d^*)$ for $0 < d < 1/2$.
By differentiating the right-hand side of (\ref{Adstar}), we see that the measure $\nu$ has a density $h$, which is positive on $(0, 1/2)$ and satisfies
\begin{equation}\label{hdlimit}
\lim_{d \rightarrow 0} h(d) = \frac{\pi^2}{2}.
\end{equation}
Also, we write $\ell_s = \ell_s^{(1/2)}$ and $\sigma_u = \sigma_u^{1/2}$, and then equations (\ref{px}) and (\ref{Lsx}) imply
\begin{equation}\label{Ls12}
\lim_{s \rightarrow \infty} \frac{\ell_s}{s} = 2.
\end{equation}

\def\MN{N}
\def\mN{u^*}

In order to prepare the proof of Lemma~\ref{lem:taboo_with_drift}, we first prove a similar result regarding the Poisson process $\mathcal N$. 
Denote by $\mathcal N^{(u)}$ the projection of $\mathcal N$, seen as a set, onto the first coordinate, i.e. it is the (random) set of $u \in \R_+$, such that $(u,a_u)\in \mathcal N$ for some $a_u$.
Define
\[
\MN_\gamma = \min_{u\in \mathcal N^{(u)}} \left\{a_u + \gamma \frac u 2\right\},\quad \mN_\gamma = \operatorname*{argmin}_{u\in \mathcal N^{(u)}} \left\{a_u + \gamma \frac u 2\right\},\quad \MN_\gamma^* =  \min_{u\in \mathcal N^{(u)},\ u\ne \mN_\gamma} \left\{a_u + \gamma \frac u 2\right\}.
\]
Note that the minimum in the definition of $\MN_\gamma$ is attained almost surely and at a unique point, so that $\mN_\gamma$ is well-defined. To see this, first define $\MN_\gamma$ with the minimum replaced by an infimum, and note that $\MN_\gamma < 1/2$ almost surely, because the measure $\nu$ gives infinite mass to every interval $[a,1/2)$, for $a<1/2$. Now, for every $A\in (0,1/2)$, on the event $\{\MN_\gamma < A\}$, every sequence of points $(u_n,a_{u_n})\in \mathcal N$ approaching this infimum has to  be contained in the triangle $\{(u,a)\in \R_+^2: a+\gamma u/2 \le A\}$  for large enough $n$. But the number of points in this triangle is finite almost surely, and so $u_n$ is a minimizer for large $n$. The uniqueness of the minimizer comes from the continuity of the intensity measure of the point process $\mathcal N$.

We now have the following result:
\begin{Lemma}
\label{lem:Poisson}
Define $\tilde R$ and $U$ as in the statement of Lemma~\ref{lem:taboo_with_drift}.  Then as $\gamma \rightarrow 0$, we have
\[
\left(\frac{1}{\sqrt\gamma}\MN_\gamma,\sqrt\gamma \mN_\gamma\right) \Rightarrow (\tilde R,2U\tilde R),
\]
and
\[
\frac{1}{\sqrt\gamma} \left(\MN_\gamma^* - \MN_\gamma\right) \Rightarrow X^*,
\]
where $X^*$ is some strictly positive r.v. Moreover, the statement still holds if, in the definitions of $\MN_\gamma$, $\mN_\gamma$ and $\MN^*_\gamma$ one adds an additional constraint $u \in [g_1(\gamma),g_2(\gamma)]$ for some functions $g_1,g_2$ satisfying $0\le g_1(\gamma) \ll \gamma^{-1/2} \ll g_2(\gamma) \le +\infty$.
\end{Lemma}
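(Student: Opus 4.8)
The plan is to analyze the Poisson point process $\mathcal N$ on $\R^+ \times (0,1/2)$ directly, exploiting its almost-product structure near the corner $(0,0)$, since $\MN_\gamma$ will be small and $\mN_\gamma$ of order $\gamma^{-1/2}$ as $\gamma \to 0$. First I would set up the natural scaling: replace $a$ by $a/\sqrt\gamma$ and $u$ by $\sqrt\gamma\,u/2$ (so that a point $(u,a_u)$ of $\mathcal N$ contributes $a_u + \gamma u/2 \le A\sqrt\gamma$ iff the rescaled point lies in the fixed triangle $T_A = \{(v,w) \in \R_+^2 : v + w \le A\}$, where $v = \sqrt\gamma u/2$, $w = a_u/\sqrt\gamma$). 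Under this scaling, the intensity measure of $\mathcal N$, which is $du \otimes h(a)\,da$ with $h$ continuous on $(0,1/2)$ and $h(0+) = \pi^2/2$ by \eqref{hdlimit}, becomes $(2/\gamma)\, dv \otimes \sqrt\gamma\, h(\sqrt\gamma w)\, dw = (2/\sqrt\gamma)\, h(\sqrt\gamma w)\, dv\,dw$. This diverges, but on any fixed triangle $T_A$ the rescaled-and-renormalized measure $\sqrt\gamma\,(2/\sqrt\gamma) h(\sqrt\gamma w)\,dv\,dw \to \pi^2\, dv\,dw$; more precisely, if we introduce an extra factor $\sqrt\gamma$ — equivalently, if we note that $\MN_\gamma/\sqrt\gamma \le A$ is the event that the Poisson process has a point in the triangle $\{u + \gamma u/2 \cdots\}$ whose expected number of points is $\int\int_{T_A} (2/\gamma) h(a)\,du\,da$ computed in original coordinates, which equals $\pi^2 A^2/2 \cdot (1 + o(1))$ as $\gamma \to 0$ since the triangle in $(u,a)$-space has $a \le A\sqrt\gamma \to 0$ so $h \approx \pi^2/2$ throughout.

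With that asymptotic in hand, the key computation is: the number $N_\gamma(A)$ of points $(u,a_u) \in \mathcal N$ with $a_u + \gamma u/2 \le A\sqrt\gamma$ is Poisson with mean $\lambda_\gamma(A) := \int_0^{2A/\sqrt\gamma}\!\!\int_0^{A\sqrt\gamma - \gamma u/2} h(a)\,da\,du \to \pi^2 A^2/2$. Hence $P(\MN_\gamma/\sqrt\gamma > A) = P(N_\gamma(A) = 0) = e^{-\lambda_\gamma(A)} \to e^{-\pi^2 A^2/2}$, which is exactly the tail of the Rayleigh density $\pi^2 r e^{-\pi^2 r^2/2}$. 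For the joint law with $\sqrt\gamma\,\mN_\gamma$, I would argue that conditionally on $\MN_\gamma/\sqrt\gamma = A$ (or via the standard Poisson argument: condition on there being exactly one point in $T_A \setminus T_{A-\eps}$ and none in $T_{A-\eps}$, then let $\eps \to 0$), the minimizing point lies at a position that is asymptotically uniform along the line $v + w = A$ in rescaled coordinates — because the intensity $\pi^2\,dv\,dw$ is flat — and $v = \sqrt\gamma u /2$, so $\sqrt\gamma\,\mN_\gamma = 2v$ is asymptotically uniform on $[0, 2A]$, i.e. $2U\MN_\gamma/\sqrt\gamma$ with $U$ uniform on $[0,1]$ independent of $\MN_\gamma$ in the limit. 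This gives the first display of Lemma~\ref{lem:Poisson}. For the second display, $\MN_\gamma^* - \MN_\gamma$ rescaled by $1/\sqrt\gamma$: the second-smallest value of $a_u + \gamma u/2$ over the Poisson process, minus the smallest; by the same scaling this converges to the gap between the two smallest values of a linear functional over a homogeneous Poisson process of intensity $\pi^2$ on the quadrant — a well-defined, a.s. strictly positive random variable $X^*$ (e.g. one can give its law explicitly via the order statistics of $\{v_i + w_i\}$, but strict positivity is all that is claimed and follows from the a.s. distinctness of the values, which is the continuity of the intensity already invoked in the paragraph preceding the lemma).

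The robustness under the extra constraint $u \in [g_1(\gamma), g_2(\gamma)]$ with $g_1(\gamma) \ll \gamma^{-1/2} \ll g_2(\gamma)$ is then a routine truncation: in rescaled $v$-coordinates this restricts to $v \in [\sqrt\gamma g_1(\gamma)/2, \sqrt\gamma g_2(\gamma)/2]$, and the hypotheses force the left endpoint to $0$ and the right endpoint to $+\infty$, so for any fixed $A$ the triangle $T_A$ is eventually entirely inside the admissible strip; since $\MN_\gamma/\sqrt\gamma$ is tight (being Rayleigh in the limit), the constraint is inactive with probability tending to $1$, and the same for $\mN_\gamma$ and $\MN_\gamma^*$. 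I expect the main obstacle to be the uniformity in the joint-convergence step — making rigorous that conditionally on the value of $\MN_\gamma$ the argmin is asymptotically uniform — which I would handle cleanly by computing, for fixed $0 < a < b$ and a measurable $S \subseteq [0,1]$, the probability $P(\MN_\gamma/\sqrt\gamma \in [a,b],\ \sqrt\gamma\,\mN_\gamma/(2\MN_\gamma/\sqrt\gamma) \cdot \tfrac{1}{\sqrt\gamma} \cdots \in S)$ as an integral against the Poisson intensity (a single point in a thin strip, none below it), passing to the limit by dominated convergence using the local uniform convergence $h(\sqrt\gamma w) \to \pi^2/2$, and checking the product form $\pi^2 r e^{-\pi^2 r^2/2}\,dr \otimes \1_{S}(u)\,du$ emerges. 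A secondary technical point is justifying that all these minima/argmins are attained and unique for every $\gamma > 0$, but this is exactly the content of the paragraph immediately preceding the lemma statement and can simply be cited.
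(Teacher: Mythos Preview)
Your proposal is correct and follows essentially the same approach as the paper: rescale the Poisson process $\mathcal N$ by $\sqrt\gamma$ in each coordinate so that its intensity converges to $(\pi^2/2)$ times Lebesgue measure on $\R_+^2$, then read off the Rayleigh law from the triangle area, the conditional uniformity of the argmin from the flatness of the limiting intensity, and the strict positivity of the rescaled gap from the continuity of the intensity; the constraint $u\in[g_1(\gamma),g_2(\gamma)]$ is handled identically, by noting that the minimizers live at scale $\gamma^{-1/2}$. The only cosmetic difference is that the paper packages the passage to the limit via a coupling of $\mathcal N_\gamma$ with the homogeneous limit process $\mathcal N_0$ (equal on compacts with high probability), whereas you plan to pass to the limit in the Poisson integrals directly by dominated convergence---both are standard and equivalent here.
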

\begin{proof}
We first consider the case $g_1 \equiv 0$ and $g_2 \equiv +\infty$.
Define the point process $\mathcal N_\gamma$ by rescaling the first coordinate of $\mathcal N$ by $\sqrt \gamma$ and the second by $1/\sqrt \gamma$, i.e. $\mathcal N_\gamma$ consists of the points $(v,a^\gamma_v) = (u\sqrt\gamma,a_u/\sqrt\gamma)$ for $(u,a_u)\in \mathcal N$. It follows that $\mathcal N_\gamma$ is a Poisson point process with intensity measure $\nu_\gamma \otimes du$, where
\[
\nu_\gamma = \nu(\sqrt\gamma\,\cdot )/\sqrt \gamma.
\]
We denote by $\mathcal N_\gamma^{(v)}$ the projection of $\mathcal N_\gamma$ onto the first coordinate. Then, we have
\begin{align*}
\MN_\gamma &= \sqrt\gamma \left(\min_{v\in \mathcal N_\gamma^{(v)}} \left\{ a^\gamma_v + \frac{v}{2} \right\} \right),\\
\mN_\gamma &= (\sqrt\gamma)^{-1} v_\gamma^*,\quad\text{where } v_\gamma^* := \operatorname*{argmin}_{v \in \mathcal N_\gamma^{(v)}} \left\{ a^\gamma_v+ \frac{v}{2} \right\},\\
\MN_\gamma^* &= \sqrt\gamma \left(\min_{v\in \mathcal N_\gamma^{(v)},\ v\ne v_\gamma^*} \left\{ a^\gamma_v + \frac{v}{2} \right\} \right).
\end{align*}
Note that $v_\gamma^* \le 2  \min_{v\in \mathcal N_\gamma^{(v)}} a^\gamma_v + \frac{v}{2}$.
Hence, it suffices to show that
\begin{equation}
\label{eq:poisson_toshow1}
\left(\min_{v\in \mathcal N_\gamma^{(v)}} a^\gamma_v + \frac{v}{2}, v_\gamma^*\right) \Rightarrow (\tilde R,2U\tilde R),
\end{equation}
and
\begin{equation}
\label{eq:poisson_toshow2}
\left(\min_{v\in \mathcal N_\gamma^{(v)},\ v\ne v_\gamma^*} a^\gamma_v + \frac{v}{2}\right) - \left(\min_{v\in \mathcal N_\gamma^{(v)}} a^\gamma_v + \frac{v}{2}\right) \Rightarrow X^*,
\end{equation}
with $X^*$ as above.
By \eqref{hdlimit}, one sees that $\nu_\gamma$ vaguely converges to $\pi^2/2$ times Lebesgue measure on $\R_+$; in fact, its density converges locally uniformly. By standard thinning and superposition arguments for Poisson processes, it follows that one can couple $\mathcal N_\gamma$ with a Poisson process $\mathcal N_0$ with intensity measure $\pi^2/2$ times Lebesgue measure on $\R_+^2$ in such a way that on every compact set, they are equal with probability going to 1 as $\gamma \to 0$. Using similar arguments to the ones used to show that $\min_{v\in \mathcal N_\gamma^{(v)}} a^\gamma_v + \frac{v}{2}$ is attained almost surely and denoting the points of $\mathcal N_0$ by $(v, a_v^0)$, it follows that the following convergence in law holds:
\begin{equation}
\label{eq:poisson_convergence}
\left(\min_{v\in \mathcal N_\gamma^{(v)}} \left\{ a^\gamma_v + \frac{v}{2} \right\},\ v_\gamma^*,\ \min_{v\in \mathcal N_\gamma^{(v)},\ v\ne v_\gamma^*} \left\{ a^\gamma_v + \frac{v}{2} \right\} \right) \Rightarrow \left(M_0,\ v_0^*,\ M_0^*\right),
\end{equation}
where $M_0 = \min_{v\in \mathcal N_0^{(v)}} a^0_v + \frac{v}{2}$, $v_0^* = \operatorname*{argmin}_{v\in \mathcal N_0^{(v)}} a^0_v + \frac{v}{2}$ and $M_0^* = \min_{v\in \mathcal N_0^{(v)},\ v\ne v_0^*} a^0_v + \frac{v}{2}$.

For $t>0$, let $C_t = \{(v,a)\in \R_+^2, a+v/2\le t\}$, which is a right triangle whose two short sides have lengths $t$ and $2t$, respectively. Hence, it has Lebesgue measure $t^2$. We therefore have, 
\[
\P(M_0 > t) = \P(\mathcal N_0(C_t) = 0) = e^{-(\pi^2/2) t^2},
\]
so that $M_0$ is equal in law to $\tilde R$. Moreover, conditioning on $M_0 = t$, we have that $v_0^*$ is uniformly distributed on $[0,2t]$ (and $a^0_{v_0^*} = t - v_0^*/2$).  Hence, we have
\[
\left(M_0 , v_0^*\right) \stackrel{\mathrm{law}}= (\tilde R,2U\tilde R).
\]
Furthermore, the remaining points form again a Poisson process with the same intensity measure but restricted to $\R_+^2\backslash C_t$. In particular, using the continuity of the intensity measure,
\[
X^* := M_0^* - M_0 > 0,\quad \text{almost surely.}
\]
These two facts, together with \eqref{eq:poisson_convergence}, yield \eqref{eq:poisson_toshow1} and \eqref{eq:poisson_toshow2} and finish the proof in the case $g_1 \equiv 0$ and $g_2 \equiv +\infty$.

To cover the general case, note that since $\sqrt{\gamma}\mN_\gamma$ converges in law to a positive, finite random variable as $\gamma\to 0$, the assumptions imply that $\mN_\gamma\in [g_1(\gamma),g_2(\gamma)]$ with high probability. This proves the first statement in the general case. For the second statement, it follows from the above proof that the minimizer for $\MN_\gamma^*$ is again of order $\gamma^{-1/2}$, so that it is in the interval $[g_1(\gamma),g_2(\gamma)]$ as well with high probability. This concludes the proof.
\end{proof}

\begin{proof}[Proof of Lemma \ref{lem:taboo_with_drift}]
We wish to reduce the problem so that we can apply Lemma~\ref{lem:Poisson}. We do this in several steps.

\underline{Step 1.}
We first argue that it is sufficient to prove (\ref{Rayleighconv}) and (\ref{secondexc}) when $b_{\gamma}(s) = 1$ and $d_{\gamma}(s) = \gamma s$ for all $s \geq 0$.  To see this, define
\begin{equation}\label{Mtildedef}
{\tilde M}_\gamma = \min_{0 \leq s \leq g(\gamma)} \, \big(K_s + \gamma s \big),\quad {\tilde m}_\gamma = \operatorname*{argmin}_{0 \leq s \leq g(\gamma)} \, \big(K_s + \gamma s \big), \quad {\tilde M}_{\gamma}^*(\theta) = \min_{\substack{0 \leq s \leq g(\gamma) \\ |s - {\tilde m}_{\gamma}| \geq \theta/\sqrt{\gamma}}}
\big(K_s + \gamma s \big).
\end{equation}
Suppose we can show that 
\begin{equation}\label{Rayleightilde}
\bigg(\frac{{\tilde M}_\gamma}{\sqrt \gamma}, \sqrt \gamma {\tilde m}_\gamma \bigg) \Rightarrow ({\tilde R},U {\tilde R}),
\end{equation}
and that for every $\kappa>0$, there exists $\eta > 0$, such that for every $\theta > 0$, we have
\begin{equation}\label{secondexctilde}
\limsup_{\gamma \rightarrow 0} P({\tilde M}_{\gamma}^*(\theta) - {\tilde M}_{\gamma} \leq \eta \sqrt{\gamma}) < \kappa.
\end{equation}
For $s \geq 0$, we have $$\bigg|\frac{b_{\gamma}(s) K_s + d_{\gamma}(s)}{K_s + \gamma s} - 1 \bigg| = \bigg| \frac{(b_{\gamma}(s) - 1)K_s + (\frac{d_{\gamma}(s)}{\gamma s} - 1) \gamma s}{K_s + \gamma s} \bigg| \leq |b_{\gamma}(s) - 1| + \bigg|\frac{d_{\gamma}(s)}{\gamma s} - 1 \bigg|.$$
It now follows from (\ref{adhyp}) that
\begin{equation}\label{ratiobound}
\lim_{\gamma \rightarrow 0} \sup_{0 \leq s \leq g(\gamma)} \bigg|\frac{b_{\gamma}(s) K_s + d_{\gamma}(s)}{K_s + \gamma s} - 1 \bigg| = 0.
\end{equation}
Therefore, we have $M_{\gamma}/{\tilde M}_{\gamma} \rightarrow_p 1$ as $\gamma \rightarrow 0$.  Also, (\ref{Rayleightilde}), (\ref{secondexctilde}) and (\ref{ratiobound}) imply that for all $\theta > 0$ and $\kappa > 0$, we have
$$\limsup_{\gamma \rightarrow 0} P \bigg( |m_{\gamma} - {\tilde m}_{\gamma}| > \frac{\theta}{\sqrt{\gamma}} \bigg) < \kappa,$$
which implies that $\sqrt{\gamma} |m_{\gamma} - {\tilde m}_{\gamma}| \rightarrow_p 0$ as $\gamma \rightarrow 0$.  Therefore, the convergence (\ref{Rayleightilde}) implies (\ref{Rayleighconv}).  Also, by (\ref{ratiobound}) and the result $\sqrt{\gamma} |m_{\gamma} - {\tilde m}_{\gamma}| \rightarrow_p 0$, the convergence (\ref{secondexctilde}) implies (\ref{secondexc}).  We will therefore aim to prove (\ref{Rayleightilde}) and (\ref{secondexctilde}).

\underline{Step 2.}
Next, we note that additionally, we can consider the process over an infinite time horizon.  For $s\ge g(\gamma)$, we have 
\[
K_s + \gamma s \ge \gamma s \ge \gamma g(\gamma) \gg \sqrt{\gamma},
\]
and so, since $M_\gamma$ is of order $\sqrt{\gamma}$ in probability as $\gamma \to 0$, the minimum is attained in the time interval $[0,g(\gamma)]$ with high probability. 
Therefore, it is enough to prove the result with $g(\gamma) = +\infty$.

\underline{Step 3.}
We recall from the two previous steps that we can and will assume $g(\gamma) = +\infty$, $b_{\gamma}(s) = 1$ and $d_{\gamma}(s) = \gamma s$ for all $s \geq 0$. We now show that we can ignore a certain time interval at the beginning. Recall that $\sigma_u$ denotes the inverse local time at $1/2$ of the Brownian taboo process. Set
\begin{align*}
M^{(1)}_\gamma &= \min_{0 \leq s \leq \sigma_{\gamma^{-1/4}}} \, \big(K_s + \gamma s \big).
\end{align*}
We claim that
\begin{equation}
\label{eq:M1ggsqrtgamma}
M^{(1)}_\gamma/\sqrt{\gamma} \rightarrow_p \infty \quad \text{as $\gamma \to 0$.}
\end{equation}
For this, we first bound from below
\begin{align}
\label{eq:M1decomposition}
M^{(1)}_\gamma \ge \min_{0 \leq s \leq \sigma_{\gamma^{-1/4}}} \, K_s = \min\left(\min_{s \leq \sigma_0}  \, K_s,\  \min_{u\in \mathcal N^{(u)},\,u\le \gamma^{-1/4}} \, a^\gamma_u\right).
\end{align}
We now consider separately both terms on the RHS of \eqref{eq:M1decomposition}.

\textit{Step 3a.} We start with the term $\min_{s \leq \sigma_0}  \, K_s$. If $z_\gamma \ge 1/2$, then this term equals 1/2. 
 If $z_{\gamma} < 1/2$, then by the strong Markov property of the Brownian taboo process, the probability that it is smaller than some $z<z_\gamma$ is the probability that for the Brownian taboo process started at $1/2$, the first excursion of the process below $z_{\gamma}$ also goes below $z$. This equals\footnote{One can also calculate this using the scale function of the Brownian taboo process.}
 $$\frac{n_{1/2}\big(A^*_{z}\big)}{n_{1/2}(A^*_{z_{\gamma}})} = \frac{\nu((0, z))}{\nu((0, z_{\gamma}))}.$$ From (\ref{hdlimit}), it follows that $\min_{s \leq \sigma_0}  \, K_s$ is of the same order as $z_\gamma$, with high probability as $\gamma\to 0$. From (\ref{initialz}), it then follows that $(\min_{s \leq \sigma_0} \, K_s)/\sqrt{\gamma} \rightarrow_p \infty$ as $\gamma\to0$.
 
\textit{Step 3b.} 
For the second term on the RHS of \eqref{eq:M1decomposition}, a first moment argument shows that,
\[
\P\left(\min_{u\in \mathcal N^{(u)},\,u\le \gamma^{-1/4}} \, a^\gamma_u \le \gamma^{3/8}\right) \le \gamma^{-1/4} \nu((0,\gamma^{3/8})),
\]
and \eqref{hdlimit} shows that this goes to 0 as $\gamma\to 0$. This shows that $(\min_{u\in \mathcal N^{(u)},\,u\le \gamma^{-1/4}} \, a^\gamma_u)/\sqrt{\gamma} \rightarrow_p \infty$ as $\gamma\to 0$.

Steps 3a and 3b combined with \eqref{eq:M1decomposition} now yield \eqref{eq:M1ggsqrtgamma}.

\underline{Step 4.}
Taking into account the last step, it remains to show that the statement of the lemma still holds if we add the condition $s\ge \sigma_{\gamma^{-1/4}}$ to all quantities (and still assuming $g(\gamma) = +\infty$, $b_{\gamma}(s) = 1$ and $d_{\gamma}(s) = \gamma s$ for all $s \geq 0$).

Recall from \eqref{Ls12} that $\ell_s/s \to 2$ almost surely as $s\to\infty$. In fact, the speed in this convergence is uniformly bounded in the starting point $z_\gamma\in[0,1]$. Indeed, note that the time for the Brownian taboo process to reach $1/2$ from any starting point is stochastically bounded by the time for the process to reach $1/2$ started from $0$ or $1$, which is a finite random variable because $0$ and $1$ are entrance boundaries for the taboo process. The same holds for the inverse local time. Hence, there exists $\eta(\gamma) \to 0$ as $\gamma\to0$, such that,
\begin{align}
\label{eq:uniform_ell_s}
&\forall s\ge \sigma_{\gamma^{-1/4}}: 2(1-\eta(\gamma)) \le \frac{\ell_s}{s} \le 2 (1+\eta(\gamma)), \quad \text{with high probability as $\gamma\to 0$},\\
\label{eq:uniform_sigma_u}
&\forall u\ge \gamma^{-1/4}: \frac{1-\eta(\gamma)}{2} \le \frac{\sigma_{u-}}{u} \le \frac{\sigma_u}{u} \le \frac{1+\eta(\gamma)}{2}, \quad \text{with high probability as $\gamma\to 0$}.
\end{align}
With the same arguments as in Step 1, it follows from \eqref{eq:uniform_ell_s} that it is enough to show the result with $d_\gamma(s) = \gamma \ell_s/2$. Now note that since $\ell_s$ is constant on the open time interval delimiting an excursion, we have the following:
\begin{align}
\label{eq:from_s_to_u}
\min_{s\ge \sigma_{\gamma^{-1/4}}} \, \big(K_s + \gamma \ell_s/2 \big) = \min_{u\in \mathcal N^{(u)}, u>\gamma^{-1/4}} \, \big(a^\gamma_u + \gamma u/2 \big).
\end{align}
Moreover, the minimizers $s^*$ and $u^*$ satisfy $s^* \in [\sigma_{u^*-},\sigma_{u^*}]$. The statement now readily follows from Lemma~\ref{lem:Poisson} (with $g_1(\gamma) = \gamma^{-1/4}$ and $g_2(\gamma) = +\infty$), and \eqref{eq:uniform_sigma_u}.
\end{proof}

\section{The all-time maximum}\label{maxsec}

In this section, we prove Theorem~\ref{th:global_max}.  We first recall that if $0 \leq s \leq t$, then $$L_t(s) = c(t - s)^{1/3}, \qquad \tau(s) = \int_0^s \frac{1}{L_t(u)^2} \: du.$$  
We next recall two lemmas about the extinction times for branching Brownian motion.  The result (\ref{survivelim}) below is an immediate consequence of Theorem 1.3 in \cite{ms20}; note that the function $\phi(v)$ in (\ref{survivelim}) corresponds to $1 - \phi(-cv/3)$ in Theorem 1.3 of \cite{ms20}.  The result (\ref{survive2}) is part of Theorem~1 of \cite{bbs14} and is stated in the scaling of the present paper in (1.4) of \cite{ms20}.

\begin{Lemma}\label{survivalms}
Let $q$ be the extinction probability for a Galton-Watson process with offspring distribution $(p_k)_{k=1}^{\infty}$, and let $v \in \R$.  If $x = L_t$, then
\begin{equation}\label{survivelim}
\lim_{t \rightarrow \infty} \P_x(\zeta > t + vt^{2/3}) = \phi(v),
\end{equation}
where $\phi$ is a decreasing function satisfying $\lim_{z \rightarrow \infty} \phi(z) = 0$ and $\lim_{z \rightarrow -\infty} \phi(z) = 1-q.$  Also, there is a positive constant $C$ such that if $0 < x \leq L_t - 1$, then
\begin{equation}\label{survive2}
\P_x(\zeta > t) \leq C L_t \sin \bigg( \frac{\pi x}{L_t} \bigg) e^{x - L_t}. 
\end{equation}
\end{Lemma}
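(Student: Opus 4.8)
The plan is to obtain both displays by quoting results already available in the literature and re-expressing them in the scaling of the present paper; there is essentially no new analytic content here, so the work is almost entirely bookkeeping.

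For (\ref{survive2}) I would cite the upper bound on the survival probability proved in Theorem~1 of \cite{bbs14}, which in the normalization used here (where $\mu = 1$ and $\beta = 1/2m$) is recorded as (1.4) of \cite{ms20}. The only thing to verify is that the right-hand side there is, up to the multiplicative constant $C$, the quantity $z_t(0,x) = L_t\sin(\pi x/L_t)e^{x-L_t}$ introduced above. The proof in \cite{bbs14} is the usual first-moment argument, resting on the fact that $e^{x}\sin(\pi x/L)$ is the relevant eigenfunction for Brownian motion with unit drift killed at $0$ and at $L$; I would invoke that result rather than reproduce it.

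For (\ref{survivelim}) I would start from Theorem~1.3 of \cite{ms20}, which identifies the limiting law of the rescaled extinction time of the process started from $L_t$; write $\Phi$ for the function appearing there. Since $L_t(s) = c(t-s)^{1/3}$ vanishes at $s = t$, a shift of the target time by $vt^{2/3}$ corresponds, to leading order, to a shift by $-(c/3)v\,t^{1/3}$ in the relevant spatial scale, which is precisely the change of variables $\phi(v) = 1 - \Phi(-cv/3)$ flagged in the text; substituting this into Theorem~1.3 of \cite{ms20} gives (\ref{survivelim}). I would then read off the three asserted properties of $\phi$ from those of $\Phi$: monotonicity, because for each fixed $t$ the map $v\mapsto \P_{L_t}(\zeta > t+vt^{2/3})$ is non-increasing and hence so is its pointwise limit (strict monotonicity following from that of $\Phi$, if \cite{ms20} provides it); $\lim_{z\to\infty}\phi(z)=0$, because surviving an arbitrarily large additional time has vanishing probability; and $\lim_{z\to-\infty}\phi(z)=1-q$, because in that regime the only contribution surviving in the limit is the event that the embedded Galton--Watson process with offspring law $(p_k)$ survives forever, an event of probability $1-q$.

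The main, and essentially only, obstacle is the clerical one: reconciling the normalizations of \cite{bbs14} and \cite{ms20} with the conventions adopted here, and keeping straight the two competing meanings of the symbol $\phi$, while confirming that the monotonicity and boundary statements claimed here are literally what Theorem~1.3 of \cite{ms20} delivers. Once that translation is in place the lemma follows immediately.
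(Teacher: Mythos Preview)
Your proposal is correct and matches the paper's approach exactly: the paper does not give a separate proof of this lemma but simply records that (\ref{survivelim}) is an immediate consequence of Theorem~1.3 of \cite{ms20} under the substitution $\phi(v) = 1 - \phi(-cv/3)$, and that (\ref{survive2}) is Theorem~1 of \cite{bbs14} as restated in (1.4) of \cite{ms20}. Your added discussion of the properties of $\phi$ is a reasonable elaboration of what the paper leaves implicit.
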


\noindent Lemma \ref{survival23} is a special case of Proposition 1.4 in \cite{ms20}.

\begin{Lemma}\label{survival23}
Suppose $x > 0$, possibly depending on $t$, and $\lim_{t \rightarrow \infty} (L_t - x) = \infty$.  Then, for all $v > 0$, we have
$$\lim_{t \rightarrow \infty} \P_x(\zeta > t + vt^{2/3} \,|\, \zeta > t) = e^{-cv/3}.$$
\end{Lemma}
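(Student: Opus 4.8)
The plan is to rewrite the conditional probability as a ratio of unconditional survival probabilities and to reduce everything to the asymptotics of $z_t(0,x) = L_t\sin(\pi x/L_t)e^{x-L_t}$, the quantity that governs the survival probability in \cite{ms20}. Set $t' = t + vt^{2/3}$; since $v>0$ we have $\{\zeta > t'\}\subseteq\{\zeta > t\}$, so
\[
\P_x(\zeta > t + vt^{2/3} \,|\, \zeta > t) = \frac{\P_x(\zeta > t')}{\P_x(\zeta > t)}.
\]
First I would observe that $L_{t'}\ge L_t$, so the hypothesis $L_t - x\to\infty$ also gives $L_{t'} - x\to\infty$; thus $x=x(t)$ is admissible with respect to the target time $t'$ as well, and $z_s(0,x)\to0$ for $s\in\{t,t'\}$ since $L_s\sin(\pi x/L_s)\le\pi(L_s-x)$ with $L_s - x\to\infty$. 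The second part of Theorem~1.1 of \cite{ms20} (the estimate already invoked above as \eqref{eq:prob_zeta_ge_t}) then provides a constant $\alpha\in(0,\infty)$ with $\P_x(\zeta > s)\sim\alpha z_s(0,x)$ as $t\to\infty$, both for $s=t$ and for $s=t'$. Writing
\[
\frac{\P_x(\zeta > t')}{\P_x(\zeta > t)} = \frac{\P_x(\zeta > t')}{\alpha z_{t'}(0,x)}\cdot\frac{z_{t'}(0,x)}{z_t(0,x)}\cdot\frac{\alpha z_t(0,x)}{\P_x(\zeta > t)},
\]
the outer two factors tend to $1$, so it remains to prove that $z_{t'}(0,x)/z_t(0,x)\to e^{-cv/3}$.

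For that ratio I would use
\[
\frac{z_{t'}(0,x)}{z_t(0,x)} = \frac{L_{t'}}{L_t}\cdot\frac{\sin(\pi x/L_{t'})}{\sin(\pi x/L_t)}\cdot e^{L_t - L_{t'}}
\]
together with the first-order expansion $L_{t'} = ct^{1/3}(1+vt^{-1/3})^{1/3} = L_t + \tfrac{cv}{3} + O(t^{-1/3})$, which yields $L_{t'}/L_t\to1$ and $e^{L_t-L_{t'}}\to e^{-cv/3}$. It then remains to check that $\sin(\pi x/L_{t'})/\sin(\pi x/L_t)\to1$, which I would argue by subsequences: along any subsequence pass to a further one along which $x/L_t\to a\in[0,1]$. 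If $a\in(0,1)$, then $\sin(\pi x/L_t)$ is bounded away from $0$ while $|\pi x/L_t - \pi x/L_{t'}| = \pi x(L_{t'}-L_t)/(L_tL_{t'})\le\pi(L_{t'}-L_t)/L_t = O(t^{-1/3})\to0$, so the ratio tends to $1$ by uniform continuity of $\sin$. If $a=0$, then $\sin(\pi x/L_t)\sim\pi x/L_t$ and $\sin(\pi x/L_{t'})\sim\pi x/L_{t'}$, so the ratio is $\sim L_t/L_{t'}\to1$. If $a=1$, then by $\sin(\pi\theta)=\sin(\pi(1-\theta))$ and $L_s-x\to\infty$ both sines are $\sim\pi(L_s-x)/L_s$ for $s=t,t'$, so the ratio is $\sim\tfrac{(L_{t'}-x)/L_{t'}}{(L_t-x)/L_t}\to1$. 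In every case the ratio tends to $1$ along the subsequence, hence it tends to $1$; combining the three factors gives $z_{t'}(0,x)/z_t(0,x)\to e^{-cv/3}$, as needed.

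The main point requiring care is that $x$ may depend on $t$, so one cannot simply use the fixed-$x$ asymptotic \eqref{eq:survival}; one needs the uniform-in-$x$ asymptotic $\P_x(\zeta > s)\sim\alpha z_s(0,x)$ from \cite{ms20}, and then a bit of attention is needed for the behavior of $\sin(\pi x/L_t)$ as $x$ approaches an endpoint of $[0,L_t]$ along a subsequence, which the case analysis above supplies; everything else is routine. Alternatively, and most directly, the lemma is precisely the special case of Proposition~1.4 of \cite{ms20} obtained by specializing the parameters there, and one may simply check that identification.
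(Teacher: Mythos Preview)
Your proposal is correct. The paper itself gives no argument at all: it simply states that Lemma~\ref{survival23} is a special case of Proposition~1.4 in \cite{ms20}, exactly as you note in your final sentence. Your main body goes further and spells out a direct derivation from the survival-probability asymptotic $\P_x(\zeta>s)\sim\alpha z_s(0,x)$ of \cite{ms20}, together with the elementary expansion $L_{t+vt^{2/3}} = L_t + cv/3 + o(1)$ and a careful case analysis for the sine ratio. This is essentially how one would prove (this special case of) Proposition~1.4 anyway, so the two approaches are not really different in substance; yours is just more self-contained, while the paper defers to the reference.
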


We next prove three lemmas concerning the maximum position for branching Brownian motion with absorption started from one particle at $x$.  Lemma \ref{MGbound} controls the all-time maximum position for branching Brownian motion with absorption, while Lemmas \ref{rtmostms} and \ref{rightLD} bound the maximum position that a particle achieves after a certain time.  Note that in Lemmas \ref{rtmostms} and \ref{rightLD}, the position $x$ of the initial particle is allowed to depend on $t$.

\begin{Lemma}\label{MGbound}
Let $x > 0$, possibly depending on $t$, and let $a > 0$.  Then
$$\P_x \Big( \sup_{s \geq 0} M(s) \geq x + a \Big) \leq e^{-a}.$$
\end{Lemma}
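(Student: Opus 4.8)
The plan is to use the additive-martingale / exponential-change-of-measure structure that is already familiar for branching Brownian motion with drift $-1$. The key observation is that $M(s) = \max_{u \in N(s)} X_u(s)$, so the event $\{\sup_{s \geq 0} M(s) \geq x + a\}$ is the event that some particle, at some time, reaches level $x + a$. By absorption at the origin, every particle's trajectory stays in $[0, \infty)$, so reaching $x + a$ means the (space-time) trajectory hits the line $\{y = x+a\}$. First I would reduce to a statement about the single spine under the many-to-one measure change with the martingale weight $\zeta(u,s) = e^{X_u(s) - X_u(0) + s/2}$ used in the proof of Lemma~\ref{lem:spine_Z}, which turns the drift $-1$ motion into driftless Brownian motion and accelerates the branching; alternatively, and more cleanly, I would work directly with the additive martingale $W(s) = \sum_{u \in N(s)} e^{X_u(s) - X_u(0)}$ for the drift-$(-1)$ BBM with branching rate $\beta = 1/2m$ and mean offspring $m+1$, which is a nonnegative martingale with $W(0) = 1$ (the exponent $\lambda = 1$ is exactly critical: $\frac{\lambda^2}{2} - \lambda + \beta m = \frac12 - 1 + \frac12 = 0$, so there is no time-decay factor).

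The main step is then a standard optional-stopping argument. Let $T_a = \inf\{s \geq 0 : M(s) \geq x + a\}$ be the first time a particle reaches $x+a$. On the event $\{T_a < \infty\}$, at time $T_a$ there is a particle $u$ with $X_u(T_a) = x + a$ (by continuity of Brownian paths, the level is hit exactly), so $W(T_a) \geq e^{X_u(T_a) - x} = e^a$. Since $W$ is a nonnegative martingale, it is a supermartingale, and applying the optional stopping theorem at the bounded stopping times $T_a \wedge n$ together with Fatou's lemma gives
\[
\E_x[W(T_a)\1_{\{T_a < \infty\}}] \le \liminf_{n \to \infty} \E_x[W(T_a \wedge n)] \le W(0) = 1.
\]
Combining this with the lower bound $W(T_a) \ge e^a$ on $\{T_a < \infty\}$ yields $e^a \,\P_x(T_a < \infty) \le 1$, i.e. $\P_x(\sup_{s\ge0} M(s) \ge x + a) = \P_x(T_a < \infty) \le e^{-a}$, which is exactly the claim.

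The one point requiring a little care — and what I'd flag as the main obstacle, though it is minor — is the justification of the optional stopping / Fatou step at the (possibly infinite) stopping time $T_a$: one needs that $W$ really is a martingale (not merely a local martingale), which follows from the finite-variance assumption on the offspring distribution and the fact that at criticality $\E_x[W(s)] = 1$ for all $s$, and one needs to handle the fact that $T_a$ may be infinite by working with $T_a \wedge n$ and letting $n \to \infty$, using nonnegativity of $W$ to drop the contribution on $\{T_a \ge n\}$. A secondary subtlety is the exact-hitting claim $X_u(T_a) = x+a$: since particles move by continuous Brownian motion and branch only finitely often in finite time, the first passage of $M(s)$ above $x+a$ occurs with some particle exactly at level $x+a$, so $W(T_a) \ge e^{a}$ holds on $\{T_a<\infty\}$; even a weaker inequality $W(T_a)\ge e^{a}$ (rather than equality) is all that is needed. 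No new ideas beyond the change-of-measure machinery already set up in Section~\ref{spinesec} are required.
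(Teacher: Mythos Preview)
Your argument is correct and is essentially the same optional-stopping idea as the paper's proof; the only difference is the choice of additive functional. The paper uses
\[
V(s) = \sum_{u \in N(s)} X_u(s)\, e^{X_u(s)},
\]
which is a genuine nonnegative \emph{martingale} for the absorbed process (the weight $x e^{x}$ vanishes at the absorbing boundary $x=0$, and one checks $\tfrac12 f'' - f' + \tfrac12 f = 0$), and then bounds $\P_x(\sigma<\infty) \le \dfrac{x e^{x}}{(x+a)e^{x+a}} \le e^{-a}$. Your choice $W(s) = \sum_{u} e^{X_u(s)-x}$ satisfies the same differential relation but does \emph{not} vanish at $0$, so with absorption it is only a nonnegative \emph{supermartingale}, not a martingale as you state; your flagged concern about ``martingale versus local martingale'' is therefore slightly misplaced. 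This is harmless, however, since your argument only uses the supermartingale inequality $\E_x[W(T_a\wedge n)] \le 1$, and the rest (Fatou, the lower bound $W(T_a) \ge e^{a}$ on $\{T_a<\infty\}$) goes through verbatim to give the same bound $e^{-a}$.
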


\begin{proof}
For $s \geq 0$, let $$V(s) = \sum_{u \in N(s)} X_u(s) e^{X_u(s)}.$$  Then the process $(V(s), s \geq 0)$ is a nonnegative martingale, as shown, for example, in Lemma 2 of \cite{hh07}.  Let $\sigma = \inf\{s: M(s) \geq x+a\}$.  Then, on the event $\sigma < \infty$, we have $V(\sigma) \geq (x+a)e^{(x+a)}$.  Therefore, by the Optional Sampling Theorem, $$\P_x(\sigma < \infty) \leq \frac{xe^{x}}{(x+a)e^{(x+a)}} \leq e^{-a},$$ as claimed.
\end{proof}

\begin{Lemma}\label{rtmostms}
Suppose $x > 0$, and define $t$ such that $x = L_t$.  Then for all $u > 0$ such that $L_{t+u} \ge x+ 1$, we have
$$\P_x \bigg( \sup_{s \geq u} M(s) \geq x \bigg) \leq C_0 (L_{t+u} - x)e^{-(L_{t+u} - x)},$$ where $C_0$ is a positive constant not depending on $t$ or $u$. 
\end{Lemma}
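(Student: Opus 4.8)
The plan is to reduce the "all-time maximum after time $u$" bound to the instantaneous bound of Lemma \ref{MGbound} by conditioning on the configuration at time $u$ and combining with a survival-type estimate at level $x$. Concretely, for the event $\{\sup_{s\ge u} M(s)\ge x\}$ to occur, some particle alive at time $u$ must have a descendant (possibly itself) that reaches level $x$ at some later time. Conditionally on $\mathcal F_u$, each particle $v\in N(u)$ spawns an independent BBM with absorption started from $X_v(u)$, so by a union bound and the branching property,
\[
\P_x\Big(\sup_{s\ge u} M(s)\ge x\Big) \le \E_x\bigg[\sum_{v\in N(u)} \P_{X_v(u)}\Big(\sup_{s\ge 0} M(s)\ge x\Big)\bigg].
\]
Applying Lemma \ref{MGbound} with $a = x - X_v(u)$ when $X_v(u) < x$ (and noting the probability is at most $1$, in fact the particle already exceeds $x$, but one can absorb this case separately or observe $L_{t+u}\ge x+1$ forces the relevant particles to be below $x$ with overwhelming weight), this is bounded by $\E_x[\sum_{v\in N(u)} e^{X_v(u) - x}] = e^{-x}\E_x[\sum_{v\in N(u)} e^{X_v(u)}]$.

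The remaining task is to estimate the additive martingale–type quantity $W(u) := \E_x[\sum_{v\in N(u)} e^{X_v(u)}]$, or rather a weighted version matching the bound we want. Here I would use the many-to-one lemma: since branching happens at rate $\beta$ with mean offspring $m+1$ and $\beta m = 1/2$, we have $\E_x[\sum_{v\in N(u)} f(X_v(u))] = e^{u/2}\E_x[f(B_u)]$ where $B$ is Brownian motion with drift $-1$ absorbed at $0$ started from $x$. Taking $f(y) = e^{y}$ and using that $e^{B_u + u/2}$ for the absorbed drift-$(-1)$ Brownian motion is exactly the Girsanov change of measure to driftless BM, one gets $W(u) = e^x \cdot \P_x^{(0)}(\text{BM absorbed at }0\text{ survives to time }u)$ — but this loses the crucial factor $(L_{t+u}-x)e^{-(L_{t+u}-x)}$. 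To recover it, the right move is to not just ask the particle to reach level $x$ but to observe that reaching level $x$ at a time when the whole process can still be far from extinction is what (\ref{survive2}) controls: I would instead bound $\P_x(\sup_{s\ge u} M(s)\ge x)$ via a change of measure / spine argument identifying the event with survival of an auxiliary process, or more directly cite (\ref{survive2}) after recentering. Specifically, reset the clock: reaching level $x = L_t$ after an additional time $u$ is morally the event that a process started at $L_t$ climbs back to its starting height, and by the harmonic-function heuristic behind $z_t$, the relevant weight is $L_{t+u}\sin(\pi x/L_{t+u})e^{x-L_{t+u}}\asymp (L_{t+u}-x)e^{-(L_{t+u}-x)}$ since $x$ is close to $L_{t+u}$ (as $L_{t+u}-x$ is the small quantity controlling everything).

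The cleanest route, and the one I would actually write: use the nonnegative martingale $V(s)=\sum_{u\in N(s)}X_u(s)e^{X_u(s)}$ from Lemma \ref{MGbound}, but stopped and evaluated at time $u$ rather than at $0$, and combine with the optional-stopping bound at level $x$ applied to the sub-population alive at time $u$; the extra decay comes from first showing that, by time $u$, with the relevant weight all particles have drifted down to height $O(L_{t+u})$ — quantitatively, apply (\ref{survive2}) to bound $\P_x(\text{a descendant reaches } x \text{ after time } u)$ by recognizing it as dominated by $\P_x(\zeta_{\text{shifted}} > t+u)$-type events. The main obstacle is precisely this last point: getting the sharp polynomial prefactor $(L_{t+u}-x)$ rather than just the exponential $e^{-(L_{t+u}-x)}$. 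This requires using the $\sin$-harmonic structure (equivalently, that the absorbed drift-$(-1)$ Brownian motion started near $L_{t+u}$ and asked to climb has probability $\asymp (L_{t+u}-x)e^{-(L_{t+u}-x)}$ of doing so before the population dies), rather than the crude martingale bound which only sees the exponential. I expect the argument to proceed by writing $\{\sup_{s\ge u}M(s)\ge x\}\subseteq\{$some particle alive at time $u$ has a descendant reaching $L_{t+u}$ in the time-changed picture$\}$ and then invoking (\ref{survive2}) together with the many-to-one lemma and the known asymptotics $L_{t+u}\sin(\pi x/L_{t+u})\sim \pi x(L_{t+u}-x)/L_{t+u}\asymp L_{t+u}-x$ valid when $L_{t+u}-x$ stays bounded, and a separate easy treatment (via Lemma \ref{MGbound} directly) when $L_{t+u}-x$ is large.
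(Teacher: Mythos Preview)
Your approach has a genuine gap. The many-to-one/union-bound route you start with,
\[
\P_x\Big(\sup_{s\ge u} M(s)\ge x\Big)\;\le\;\E_x\Big[\sum_{v\in N(u)} e^{X_v(u)-x}\Big],
\]
cannot possibly produce any decay in $u$: the quantity $\sum_{v} e^{X_v(u)}$ is a (super)martingale, so its expectation is at most $e^x$ and the bound collapses to $1$. The same thing happens with $V(u)=\sum_v X_v(u)e^{X_v(u)}$. You notice the missing factor $(L_{t+u}-x)e^{-(L_{t+u}-x)}$ but the patches you suggest --- ``identifying the event with survival of an auxiliary process'', or the inclusion ``$\{\sup_{s\ge u}M(s)\ge x\}\subset\{$some particle at time $u$ has a descendant reaching $L_{t+u}\}$'' --- are either vague or false (reaching $x$ does not imply reaching $L_{t+u}>x$).

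The key idea you are missing is to run the comparison in the \emph{opposite} direction: rather than conditioning at time $u$ and pushing forward, apply the strong Markov property at the stopping time $\sigma=\inf\{s\ge u:M(s)\ge x\}$ and push forward from there. On $\{\sigma<\infty\}$ there is a particle at level $x=L_t$ at time $\sigma\ge u$, and by (\ref{survivelim}) its descendants survive an additional time $t$ with probability at least a constant $C_1>0$; since $\sigma+t\ge u+t$, this forces $\zeta>t+u$. Hence
\[
\P_x(\sigma<\infty)\;\le\;\frac{1}{C_1}\,\P_x(\zeta>t+u),
\]
and now (\ref{survive2}) with $t+u$ in place of $t$ gives exactly $C\,L_{t+u}\sin(\pi x/L_{t+u})e^{x-L_{t+u}}\asymp (L_{t+u}-x)e^{-(L_{t+u}-x)}$. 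The polynomial prefactor comes for free from the $\sin$ in the survival estimate, not from any refinement of the many-to-one step.
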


\begin{proof}
We first suppose that $x = L_t \ge 1$. Then, from equation (\ref{survivelim}) with $v = 0$, one can see that there is a positive constant $C_1$ such that $\P_x(\zeta > t) \geq C_1$ for all $x \geq 1$. Let $\sigma = \inf\{s \geq u: M(s) \geq x\}$.  By applying the strong Markov property at time $\sigma$, we get
\begin{align*}
\P_x(\zeta > t + u) &\geq \P_x(\sigma < \infty) \P_x(\zeta > t + u \,|\, \sigma < \infty) \\
&\geq \P_x(\sigma < \infty) \P_x(\zeta > t) \\
&\geq C_1 \P_x(\sigma < \infty).
\end{align*}
Rearranging this equation and applying (\ref{survive2}) with $t+u$ in place of $t$ gives 
\begin{align*}
\P_x(\sigma < \infty) &\leq \frac{1}{C_1} \P_x(\zeta > t + u) \\
&\leq \frac{C}{C_1} L_{t+u} \sin \bigg( \frac{\pi x}{L_{t+u}} \bigg) e^{x - L_{t+u}} \\
&\leq \frac{C \pi}{C_1} (L_{t+u} - x) e^{x - L_{t+u}},
\end{align*}
which implies the statement of the lemma with $C_0 = C \pi/C_1$.

It remains to consider the case $x<1$. Then, translating the whole process by $1-x$ but still keeping the absorption at 0 can only increase the maximal displacement.  Hence,
\[
\P_x \bigg( \sup_{s \geq u} M(s) \geq x \bigg) \le \P_1 \bigg( \sup_{s \geq u} M(s) \geq 1 \bigg).
\]
Now, if $1+x\le L_{t+u} \le 2$, then $(L_{t+u}-x)e^{-L_{t+u}-x} \ge c_0>0$ for some constant $c_0$, and so the statement of the lemma follows with $C_0 = 1/c_0$. On the other hand, if $L_{t+u} > 2$, then if $t_1\ge t$ is such that $L_{t_1} = 1$ and $u_1 = u + t-t_1 \le u$, then we can apply the lemma with $x = 1 = L_{t_1}$ and $u = u_1$ in order to get (noting that $t_1+u_1 = t+u$)
\begin{align*}
\P_1 \bigg( \sup_{s \geq u} M(s) \geq 1 \bigg)
&\le \P_1 \bigg( \sup_{s \geq u_1} M(s) \geq 1 \bigg) \\
&\le C_0(L_{t+u}-1)e^{-(L_{t+u}-1)} \\
&\le e^{1-x}C_0(L_{t+u}-x)e^{-(L_{t+u}-x)}.
\end{align*}
This finishes the proof of the lemma.
\end{proof}

\begin{Lemma}\label{rightLD}
Suppose $x > 0$, possibly depending on $t$, and $\lim_{t \rightarrow \infty} (L_t - x) = \infty$.  Choose positive constants $a$ and $b$ such that $0 < a + 2/3 < b < 1$.  Then
$$\lim_{t \rightarrow \infty} \P_x \bigg( \sup_{s \geq t^b} M(s) \geq L_t - t^a \, \Big|\, \zeta > t \bigg) = 0.$$
\end{Lemma}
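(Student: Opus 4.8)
\textbf{Proof proposal for Lemma~\ref{rightLD}.}

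The plan is to reduce the statement about the BBM conditioned on survival to an unconditional statement about BBM started from $L_t$, and then apply Lemma~\ref{rtmostms}. First I would observe that, by Lemma~\ref{survival23}, conditionally on $\zeta > t$ the extinction time $\zeta$ lies in $[t, t + vt^{2/3}]$ with probability at least $1 - e^{-cv/3}$, so fixing $v$ large it suffices (up to an error $e^{-cv/3}$ that can be sent to $0$ afterwards) to bound $\P_x(\sup_{s \geq t^b} M(s) \geq L_t - t^a,\ \zeta > t)$ and divide by $\P_x(\zeta > t)$. The denominator is handled by \eqref{eq:survival}: $\P_x(\zeta > t) \asymp x e^{x - L_t}$.

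Next I would run a first-moment / stopping-time argument on the numerator. Let $\sigma = \inf\{s \geq t^b : M(s) \geq L_t - t^a\}$. On $\{\sigma < \infty\}$, apply the strong Markov property at time $\sigma$: from a particle at height $\geq L_t - t^a$ at time $\sigma \leq t + vt^{2/3}$, the probability that \emph{that} particle's descendants keep the process alive until time $t$ is, by \eqref{eq:survival} again (or by \eqref{survivelim} with a suitable negative argument, since $L_t - t^a$ corresponds to a time parameter $t'$ with $t' - t \gg t^{2/3}$ because $a + 2/3 < b$... wait, one must be careful here), bounded below by a constant $C_1 > 0$ — more precisely, solving $L_{t'} = L_t - t^a$ gives $t' - t \sim -\tfrac{3}{c^3} t^{a + 2/3} \cdot$const, which is $\gg t^{2/3}$, so a particle at $L_t - t^a$ is ``well above'' the critical curve for the remaining time and survives to time $t$ with probability bounded below. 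Hence
\[
\P_x(\zeta > t) \geq \P_x(\sigma < \infty)\, C_1,
\]
so $\P_x(\sigma < \infty) \leq C_1^{-1}\P_x(\zeta > t) \asymp x e^{x - L_t}$; but this crude bound is not small enough by itself. The refinement is to use Lemma~\ref{rtmostms}: translate/restart so that the relevant comparison is with a process started at $L_{t_1} = L_t - t^a$, i.e. $t_1 = t - \Theta(t^{a+2/3})$, and the condition $s \geq t^b$ translates (after shifting time origin) into $s \geq u_1$ with $u_1 = t^b - (t - t_1) \sim t^b$ since $b > a + 2/3$. Then Lemma~\ref{rtmostms}, applied with initial point $L_{t_1}$ and delay $u_1 \sim t^b$, bounds the probability that a process started at $L_t - t^a$ ever reaches $L_t$ after time $\sim t^b$ by $C_0 (L_{t_1 + u_1} - (L_t - t^a)) e^{-(L_{t_1+u_1} - (L_t - t^a))}$, and $L_{t_1 + u_1} - (L_t - t^a) \sim c u_1 / (3 (t-t_1)^{2/3}) \sim c t^{b} / (3 \cdot \Theta(t^{(a+2/3) \cdot 2/3})) \to \infty$, so this probability is superpolynomially small. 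Multiplying the (at most $\asymp x e^{x-L_t}$, or more simply $\leq 1$, Markov bound on the number of such particles times) contribution and dividing by $\P_x(\zeta > t)$, the $e^{x - L_t}$ factors cancel and one is left with a quantity tending to $0$.

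The cleanest route is probably a union over particles: by the many-to-one lemma or directly, $\E_x[\#\{u : \text{ancestral line of } u \text{ reaches } L_t - t^a \text{ at some } s \geq t^b \text{ and } u \text{ has a descendant alive at } t\}]$ is bounded, via the spinal change of measure, by (number of potential branch points) $\times$ (probability a single Brownian path started at $L_t - t^a$ reaches the relevant level and survives), and the survival-probability weight $z_t$ makes the exponential factors match up so that the net bound is $C_0 (L_{t_1+u_1} - (L_t - t^a)) e^{-(L_{t_1+u_1} - (L_t - t^a))} \cdot \P_x(\zeta > t) / C_1$; dividing by $\P_x(\zeta > t)$ leaves the superpolynomially small factor. \textbf{The main obstacle} is getting the asymptotics of the exponents exactly right — one must verify that $b > a + 2/3$ is precisely the condition ensuring both that $u_1 \sim t^b$ (the delay survives the time shift) and that $L_{t_1 + u_1} - (L_t - t^a) \to \infty$ (so Lemma~\ref{rtmostms} gives a small bound), and one must confirm that restarting at the stopping time $\sigma$ does not destroy the independence needed, which is where the strong Markov property and a careful definition of ``ancestral line reaching level $L_t - t^a$'' come in. The condition $a > 0$ (rather than $a = 0$) is what keeps $L_t - t^a < L_t$ with a genuine gap, and $b < 1$ keeps $t^b \ll t$ so the statement is non-trivial.
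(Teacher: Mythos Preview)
Your opening move is right: set $\sigma = \inf\{s \geq t^b : M(s) \geq L_t - t^a\}$, apply the strong Markov property at $\sigma$, and observe that a particle at height $y := L_t - t^a$ survives with probability bounded below by a positive constant. But you then aim at the wrong target time. You write ``keep the process alive until time $t$'', which yields only $\P_x(\zeta > t) \geq C_1 \P_x(\sigma < \infty)$ and hence the useless bound $\P_x(\sigma<\infty \mid \zeta>t) \le C_1^{-1}$, as you correctly note. The missing twist is to aim at time $t + t^b/2$ instead of $t$: since $\sigma \geq t^b$, a particle at $y$ at time $\sigma$ that survives an additional $t - t^b/2$ units keeps the process alive until $\sigma + (t - t^b/2) \geq t + t^b/2$. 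One checks (this is where $a + 2/3 < b$ enters) that $y = L_u$ with $u \geq t - t^b/2$ for large $t$, so $\P_y(\zeta > t - t^b/2) \geq \P_y(\zeta > u) \to \phi(0) > 0$. This gives
\[
\P_x\Big(\zeta > t + \tfrac{t^b}{2} \,\Big|\, \zeta > t\Big) \;\geq\; \P_x\big(\sigma < \infty \,\big|\, \zeta > t\big)\cdot \P_y\Big(\zeta > t - \tfrac{t^b}{2}\Big).
\]
Now the left side tends to $0$ by Lemma~\ref{survival23} because $b > 2/3$, and the second factor on the right is bounded away from $0$; hence $\P_x(\sigma < \infty \mid \zeta > t) \to 0$. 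That is the entire proof.

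Your subsequent attempts via Lemma~\ref{rtmostms} and a many-to-one argument are trying to repair the shortfall by a different route, but they are not set up correctly. Lemma~\ref{rtmostms} bounds the probability that \emph{unconditioned} BBM started at a point $z$ returns to $z$ after a delay; you never explain how to get from the original process started at $x$ (and conditioned on $\zeta>t$) to that situation, and your time-shift $u_1 = t^b - (t - t_1)$ has no clear probabilistic meaning here since you have not first stopped the process at level $L_t - t^a$. The many-to-one sketch likewise asserts a bound with matching exponential factors without justification. These routes could perhaps be made to work with substantial extra effort, but once you replace the target time $t$ by $t + t^b/2$ in your first argument, none of it is needed.
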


\begin{proof}
Let $y = L_t - t^a$.  Note that $y = L_u$, where $$u = \bigg( \frac{y}{c} \bigg)^3 = \bigg(t^{1/3} - \frac{t^a}{c} \bigg)^3 \geq t - \frac{t^a}{c} \cdot 3 t^{2/3},$$ which is greater than $t - t^b/2$ for sufficiently large $t$ because $a + 2/3 < b$.  Therefore, by Lemma~\ref{survivalms}, 
\begin{equation}\label{liminf0}
\liminf _{t \rightarrow \infty} \P_y\bigg(\zeta > t - \frac{t^b}{2} \bigg) \geq \liminf_{t \rightarrow \infty} \P_y (\zeta > u) = \phi(0) > 0.
\end{equation}
Thus, by the strong Markov property applied at the time $\sigma = \inf\{s \geq t^b: M(s) > y\}$, we have
\begin{align}
\P_x\bigg(\zeta > t + \frac{t^b}{2} \, \Big|\, \zeta > t \bigg) &\geq \P_x \bigg( \sup_{s \geq t^b} M(s) \geq y \, \Big|\, \zeta > t \bigg) \P_x \bigg( \zeta > t + \frac{t^b}{2} \,\Big| \, \zeta > t, \: \sup_{s \geq t^b} M(s) \geq y \bigg) \nonumber \\
&\geq \P_x \bigg( \sup_{s \geq t^b} M(s) \geq y \, \Big|\, \zeta > t \bigg) \P_x \bigg( \zeta > t + \frac{t^b}{2} \,\Big| \, \sup_{s \geq t^b} M(s) \geq y \bigg) \nonumber \\
&\geq \P_x \bigg( \sup_{s \geq t^b} M(s) \geq y \, \Big|\, \zeta > t \bigg) \P_y\bigg( \zeta > t - \frac{t^b}{2} \bigg). \label{zetaab}
\end{align}
Because $b > 2/3$, the term on the left-hand side of (\ref{zetaab}) tends to zero as $t \rightarrow \infty$ by Lemma \ref{survival23}.  By (\ref{liminf0}), the second of the two factors on the right-hand side of (\ref{zetaab}) stays bounded away from zero as $t \rightarrow \infty$.  It follows that the first factor on the right-hand side of (\ref{zetaab}) must tend to zero as $t \rightarrow \infty$, which is precisely the statement of the lemma.
\end{proof}

\begin{proof}[Proof of Theorem~\ref{th:global_max}]
We will work with the BBM with spine.  The basic idea is that the global maximum and the argmax should be roughly equal to the global maximum and the argmax of the spine.  Let $(K_r)_{r \geq 0}$ be a Brownian taboo process started from $K_0 = (L_t-x)/L_t$, and let $${\tilde X}_{\xi}(s) = L_t(s)(1-K_{\tau(s)})$$ denote the position of the spine at time $s$.  Let $5/6 < b < 1$, and define
$${\tilde M} = \max_{s \in [0, t^b]} {\tilde X}_{\xi}(s), \qquad {\tilde m} = \operatorname*{arg\,max}_{s \in [0, t^b]} \: {\tilde X}_{\xi}(s).$$
We will now aim to prove (\ref{maxrem}).  We will first show that (\ref{maxrem}) holds if $\mathfrak M$ and $\mathfrak m$ are replaced by ${\tilde M}$ and ${\tilde m}$.  That is, we will first show that as $t\to\infty$,
\begin{equation}\label{spineMm}
\bigg( L_t^{-1/2} (L_t - {\tilde M}), L_t^{1/2} \frac{{\tilde m}}{t} \bigg) \Rightarrow (R, 3UR).
\end{equation}

Note that
$$L_t - {\tilde X}_{\xi}(s) = L_t - L_t(s) (1-K_{\tau(s)}) = L_t \bigg( \bigg(1 - \frac{L_t(s)}{L_t} \bigg) + \frac{L_t(s)}{L_t} K_{\tau(s)}\bigg).$$
Dividing both sides by $L_t$ and making the substitution $r = \tau(s)$ on the right-hand side, we have
$$\frac{L_t - {\tilde X}_{\xi}(s)}{L_t} = \bigg(1 - \frac{L_t(\tau^{-1}(r))}{L_t} \bigg) + \frac{L_t(\tau^{-1}(r))}{L_t} K_r.$$
Now define
$$\gamma = \frac{\pi^2}{2L_t}.$$
We will apply Lemma~\ref{lem:taboo_with_drift} with $g(\gamma) = \tau(t^b)$ (suppose $t\ge 1$ without loss of generality, so that $t^b \le t$). Indeed, it follows from \eqref{tauasymp} and the hypothesis on $b$ that, as $t\to\infty$ (equivalently, as $\gamma\to 0$),
\[
\gamma^{-1/2} \ll \tau(t^b) \ll \gamma^{-1}.
\]
In particular, hypothesis \eqref{gcond} from Lemma~\ref{lem:taboo_with_drift} is verified.
Furthermore, define for $r\in [0,\tau(t^b)]$, 
\[
b_{\gamma}(r) = \frac{L_t(\tau^{-1}(r))}{L_t}, \qquad d_{\gamma}(r) = 1 - \frac{L_t(\tau^{-1}(r))}{L_t}.
\]
Then
\begin{equation}\label{spinemotion}
\frac{L_t - {\tilde X}_{\xi}(s)}{L_t} = b_{\gamma}(r) K_r + d_{\gamma}(r).
\end{equation}
It follows that
\begin{equation}\label{LtMm}
\bigg(\frac{L_t - {\tilde M}}{L_t}, \tilde m \bigg) = \bigg( \min_{r \in [0, \tau(t^b)]} \big( b_{\gamma}(r) K_r + d_{\gamma}(r) \big), \: \tau^{-1} \Big( \operatorname*{argmin}_{r \in [0, \tau(t^b)]} \big( b_{\gamma}(r) K_r + d_{\gamma}(r) \big) \Big) \bigg).
\end{equation}
Because $\tau(t^b) \ll t^{1/3}$, we can see from (\ref{Ltdiff}) that
$$\lim_{t \rightarrow \infty} \sup_{r \in [0, \tau(t^b)]} |1 - b_{\gamma}(r)| = 0$$
and $$\lim_{t \rightarrow \infty} \sup_{r \in [0, \tau(t^b)]} \bigg|1 - \frac{d_{\gamma}(r)}{\gamma r} \bigg| = 0.$$  Because $\gamma \rightarrow 0$ as $t \rightarrow \infty$, the hypotheses (\ref{adhyp}) in Lemma \ref{lem:taboo_with_drift} are satisfied.  Because $L_t - x \gg t^{1/6}$ by assumption, we have $K_0 \gg t^{-1/6}$, and therefore $K_0 \gg \sqrt{\gamma}$, so the assumption (\ref{initialz}) in Lemma~\ref{lem:taboo_with_drift} also holds.  Thus, by (\ref{Rayleighconv}),
\begin{equation}\label{newRUR}
\bigg( \frac{1}{\sqrt{\gamma}} \min_{r \in [0, \tau(t^b)]} \big( b_{\gamma}(r) K_r + d_{\gamma}(r) \big), \sqrt{\gamma} \operatorname*{argmin}_{r \in [0, \tau(t^b)]} \big( b_{\gamma}(r) K_r + d_{\gamma}(r) \big) \bigg) \Rightarrow ({\tilde R}, U {\tilde R}).
\end{equation}
Combining (\ref{LtMm}) with (\ref{newRUR}) and using (\ref{tauinverse}), we get, as $t\to\infty$,
\begin{equation}\label{LtMprelim}
\bigg( \frac{L_t - {\tilde M}}{L_t \sqrt{\gamma}}, \frac{\sqrt{\gamma} {\tilde m}}{L_t^2} \bigg) \Rightarrow ({\tilde R}, U {\tilde R}).
\end{equation}
Note that $L_t \sqrt{\gamma} = \sqrt{\pi^2 L_t/2}$, and recall from (\ref{gammaLt2}) that
  $$\frac{\sqrt{\gamma}}{L_t^2} = \sqrt{\frac{2}{\pi^2}} \cdot \frac{\sqrt{L_t}}{3t}.$$  Therefore, we can rewrite (\ref{LtMprelim}) as
$$\bigg( L_t^{-1/2} (L_t - {\tilde M}), L_t^{1/2} \frac{{\tilde m}}{t} \bigg) \Rightarrow \bigg( \sqrt{\frac{\pi^2}{2}} {\tilde R}, 3 \sqrt{\frac{\pi^2}{2}} U {\tilde R} \bigg),\quad t\to\infty.$$
Because ${\tilde R} \sqrt{\pi^2/2}$ has the same distribution as $R$, the result (\ref{spineMm}) follows.

The next step is to extend the result to the case of the full BBM with spine, rather than considering only the spinal particle.  Let
$$\tilde{\mathfrak M} = \max_{s \in [0, t^b]} {\tilde X}_1(s), \qquad \tilde{\mathfrak m} = \operatorname*{arg\,max}_{s \in [0, t^b]} {\tilde X}_1(s).$$  
To compare $\tilde{\mathfrak M}$ with ${\tilde M}$, let $0 < \delta < 1/6$, and $A_1$ be the event that there exist $s \in (0, t^b)$ and $z > 0$ such that a particle branches off the spine at time $s$ and location $z$, and eventually has a descendant that gets above $z + t^{\delta}$.  To bound $P(A_1)$, recall that the spine branches at rate $(m+1)\beta$ according to the size-biased offspring distribution.  Let $\zeta$ denote the mean of the size-biased offspring distribution, which is finite because the offspring distribution has finite variance.  Then the expected number of children of the spine by time $t^b$ is $\beta(m+1)\zeta t^b$.  By Lemma \ref{MGbound}, the probability that a particular one of these children has a descendant that gets more than $t^{\delta}$ above the location where it branched off the spine is at most $e^{-t^{\delta}}$.  Thus,
\begin{equation}\label{descendant}
P(A_1) \leq \beta (m+1) \zeta t^b \cdot e^{-t^{\delta}},
\end{equation}
which tends to zero as $t \rightarrow \infty$.  Because, on the event $A_1$, we have $0 \leq \tilde{\mathfrak M} - {\tilde M} \leq t^{\delta}$, it follows that
\begin{equation}\label{changeM}
L_t^{-1/2}(\tilde{\mathfrak M} - {\tilde M}) \rightarrow_p 0,
\end{equation}
where $\rightarrow_p$ denotes convergence in probability as $t \rightarrow \infty$.

We next derive a similar result for the argmax. Recall that $\tilde m$ is the time when the spine attains its maximum and note that equation (\ref{spineMm}) implies that ${\tilde m} = \Theta_p(t^{5/6})$, and therefore (\ref{tauasymp}) implies that $\tau({\tilde m}) = \Theta_p(t^{1/6})$. We aim to show that
\begin{equation}\label{changem}
t^{-5/6}({\tilde{\mathfrak m}} - {\tilde m}) \rightarrow_p 0,
\end{equation}
We do this in two steps. Define $\hat m \le \tilde{\mathfrak m}$ to be the time at which the particle that attains the maximum branches off the spine. We bound seperately $|\hat m - \tilde m|$ and $\tilde{\mathfrak m} - \hat m$, starting with the first one. Let $\kappa > 0$. By \eqref{secondexc} from Lemma~\ref{lem:taboo_with_drift} and \eqref{spinemotion}, there exists $\eta>0$, such that for every $\theta > 0$, $$\limsup_{t \rightarrow \infty} \P_x\bigg( \tilde{X_{\xi}}({\tilde m}) - \sup_{\substack{s \in [0,\tau(t^b)]\\ |\tau(s)-\tau(\tilde m)| \ge \theta/\sqrt\gamma}} \tilde{X}_{\xi}(s) \leq \eta \sqrt{\gamma} L_t  \bigg) < \kappa.$$
Because $\sqrt{\gamma} L_t \asymp t^{1/6}$, it thus follows from (\ref{descendant}) that with probability tending to one as $t \rightarrow \infty$, 
\[
|\tau(\hat m)-\tau(\tilde m)| < \theta/\sqrt{\gamma}.
\]
But since $\tau(\tilde m) = \Theta_p(t^{1/6})$ and $\sqrt\gamma = \Theta_p(t^{-1/6})$, and since $\theta>0$ is arbitrary, it follows from \eqref{tauinverse} that 
\begin{equation}\label{changemhat}
t^{-5/6}(\hat m - \tilde m) \rightarrow_p 0.
\end{equation}

It remains to bound the difference $\tilde{\mathfrak m} - \tilde m$.  Choose $d$ such that $2/3 < d < 5/6$, and let $u = t^d$.  Let $A_2$ 
be the event that there exist $s \in (0, t^b)$ and $z > 0$ such that a particle branches off the spine at time $s$ and location $z$, and a descendant of this particle gets above $z$ after time $t + u$.
In particular, we have
\begin{equation}
\label{A2}
\tilde{\mathfrak{m}} - \hat m \le t^d,\quad\text{on the event $A_2^c$}.
\end{equation}
To bound the probability of the event $A_2$, suppose a particle branches off the spine at time $s$ and location $z$, and choose $v$ such that $z = L_v$.  By Lemma \ref{rtmostms}, the probability that any descendant of the particle gets above $z$ after time $s + u$ is at most $C_0 (L_{v + u} - z) e^{-(L_{v+u} - z)}$.  Note that $L_{v+u} - z = L_{v+u} - L_v$, which is a decreasing function of $v$.  Because the spine never gets above $L_t$, the expression $L_{v+u} - z$ is minimized when $z = L_t$, or equivalently when $v = t$.  Therefore, the probability that any descendant of the particle gets above $z$ after time $s + u$ is at most $C_0 (L_{t+u} - L_t) e^{-(L_{t+u} - L_t)}$.  Thus,
\begin{equation}\label{PA2}
P(A_2) \leq \beta (m+1) \zeta t^b \cdot C_0 (L_{t+u} - L_t) e^{-(L_{t+u} - L_t)}.
\end{equation}
Because $L_{t+u} - L_t \asymp t^{d - 2/3}$, the above expression tends to zero as $t \rightarrow \infty$. Together with \eqref{changemhat} and \eqref{A2}, this proves \eqref{changem}. Then, from (\ref{spineMm}), (\ref{changeM}), and (\ref{changem}), we get
\begin{equation}\label{BBMwspine}
\bigg( L_t^{-1/2} (L_t - \tilde{\mathfrak M}), L_t^{1/2} \frac{\tilde{\mathfrak m}}{t} \bigg) \Rightarrow (R, 3UR).
\end{equation}

By Proposition \ref{lem:spine_comparison}, the result (\ref{BBMwspine}) still holds when we replace the BBM with spine by the original branching Brownian motion conditioned on $\zeta > t$ in the definitions of $\tilde{\mathfrak M}$ and $\tilde{\mathfrak m}$.
It remains to show that for the branching Brownian motion conditioned on $\zeta > t$, with high probability the maximum will not occur after time $t^b$.  However, note that (\ref{spineMm}) implies that $L_t - \tilde{\mathfrak M} = \Theta_p(t^{1/6})$.  Therefore, we can choose $a$ such that $1/6 < a < b - 2/3$, and then apply Lemma \ref{rightLD} with these choices of $a$ and $b$.  Lemma \ref{rightLD} implies that, with probability tending to one as $t \rightarrow \infty$, no particle gets above $L_t - t^a$ after time $t^b$, which means that with probability tending to one as $t \rightarrow \infty$, the maximum occurs before time $t^b$.  The proof is now complete.
\end{proof}

\bigskip
\noindent {\bf {\Large Acknowledgments}}

\bigskip
\noindent We thank Julien Berestycki for helpful discussions related to this work.

\end{document}